\documentclass[12pt,a4paper,reqno]{amsart}

\usepackage[margin = 26mm]{geometry}

\usepackage{amsmath,amssymb,amsthm}
\usepackage[hidelinks]{hyperref}

\newcommand{\R}{\mathbb R}
\newcommand{\E}{\mathbb E}
\newcommand{\Prob}{\mathbb P}
\newcommand{\op}{\mathrm{op}}
\newcommand{\F}{\mathrm F}
\DeclareMathOperator{\tr}{tr}
\DeclareMathOperator{\rank}{rank}
\DeclareMathOperator{\diag}{diag}
\DeclareMathOperator{\supp}{supp}
\DeclareMathOperator{\conv}{conv}

\newtheorem{theorem}{Theorem}[section]
\newtheorem{corollary}[theorem]{Corollary}
\newtheorem{lemma}[theorem]{Lemma}
\newtheorem{proposition}[theorem]{Proposition}
\theoremstyle{definition}

\theoremstyle{remark}

\numberwithin{equation}{section}

\title[The sharp exponent for distance to the cube]
{The sharp exponent for the Banach--Mazur distance to the cube}

\author{Omer Friedland}

\address{Institut de Math\'ematiques de Jussieu, Sorbonne Universit\'e,
Campus Pierre et Marie Curie, 4 Place Jussieu,
75252 Paris cedex 05, France}

\email{omer.friedland@imj-prg.fr}

\date{}

\subjclass[2020]{Primary 46B07; Secondary 52A23, 60B20}

\keywords{Banach--Mazur distance, cube, Gaussian polytopes,
variational sparsity, random matrices}

\begin{document}

\begin{abstract}
Let $R_\infty(n)$ be the maximal Banach--Mazur distance from
an $n$-dimensional real normed space to $\ell_\infty^n$.
We prove that $R_\infty(n)\ge cn^{2/3}(\log(2n))^{-10}$.
Together with the upper bound of Friedland, Naor, and Youssef,
this gives $R_\infty(n) = n^{2/3+o(1)}$.
The examples are polars of symmetric Gaussian polytopes with $2n$
generators, and the lower bound holds with exponentially high
probability. A joint minimization over coefficient representations
and positive weights yields an exact second-variation identity and
sparse coefficients. A Gaussian projection argument controls all
low-rank completions of the recorded matrix. An adaptive choice of
exceptional generators then reduces the remaining case to a
fixed-shift Gaussian small-ball estimate over two finite nets.
\end{abstract}

\maketitle

\section{Introduction} \label{sec:introduction}

For origin-symmetric convex bodies $K,Q\subset\R^n$ with nonempty
interior, their Banach--Mazur distance is
$$
d_{\mathrm{BM}}(K,Q)
 = \inf\{a\ge1:TQ\subset K\subset aTQ, T\in GL(n)\}.
$$
Write $B_1^n = \{x:\sum_i|x_i|\le1\}$ and
$B_\infty^n = [-1,1]^n$. The radius
$$
R_\infty(n) = \sup_K d_{\mathrm{BM}}(K,B_\infty^n)
$$
measures how well an arbitrary symmetric convex body can be
approximated by a parallelepiped. Here the supremum runs over
all origin-symmetric convex bodies in $\R^n$ with nonempty interior.
Polarity preserves the distance, so
$$
R_\infty(n) = \sup_K d_{\mathrm{BM}}(K,B_1^n).
$$

The problem of estimating this radius was raised by
Gr\"unbaum~\cite[Remark~6(ii)]{Grunbaum}.
Szarek~\cite{Szarek90} proved the lower bound
$R_\infty(n)\gtrsim\sqrt n\log n$.
Tikhomirov~\cite{Tikhomirov} obtained the exponent $5/9$, and
the author~\cite{Friedland47} subsequently obtained $4/7$,
up to powers of $\log n$.
The author~\cite{Friedland58} and, independently,
Hmadi~\cite{Hmadi} reached the exponent $5/8$, again up to
logarithmic factors.

On the upper-bound side, the classical estimate is $R_\infty(n)\le n$.
Bourgain and Szarek~\cite{BS88} obtained the first asymptotic
improvement, $R_\infty(n)\lesssim n\exp(-c\sqrt{\log n})$.
Szarek and Talagrand~\cite{ST89} and Giannopoulos~\cite{Giannopoulos95}
subsequently obtained the exponents $7/8$ and $5/6$, respectively;
Youssef~\cite{Youssef14} gave another proof of the latter bound
with an improved constant.
Friedland, Naor, and Youssef~\cite{FNY} recently proved
$R_\infty(n)\lesssim n^{2/3}$.
We show that this polynomial exponent is sharp, leaving a
logarithmic gap between the lower and upper bounds.

\begin{theorem} \label{thm:main}
There are absolute constants $c,C>0$ and $n_0\in\mathbb N$ such
that the following holds for every $n\ge n_0$.
Let $g_1,\ldots,g_{2n}$ be independent standard Gaussian vectors
in $\R^n$, and put
$$
K = \conv\{\pm g_1,\ldots,\pm g_{2n}\},
\quad P = K^\circ.
$$
Then $K$ and $P$ are convex bodies almost surely, and
\begin{align} \label{eq:main}
\Prob\left\{
d_{\mathrm{BM}}(P,B_\infty^n)
 = d_{\mathrm{BM}}(K,B_1^n)
\ge c\frac{n^{2/3}}{(\log(2n))^{10}}
\right\}\ge1-Ce^{-cn}.
\end{align}
\end{theorem}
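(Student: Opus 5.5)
The plan reduces the distance bound to a statement about every invertible linear map $T$. Almost surely $g_1,\ldots,g_{2n}$ span $\R^n$, so $K$ is a convex body; hence so is $P$, and polarity gives $d_{\mathrm{BM}}(P,B_\infty^n)=d_{\mathrm{BM}}(K,B_1^n)$. Suppose $TB_1^n\subset K\subset aTB_1^n$. Writing $x_j=Te_j$, the first inclusion places $\pm x_j$ in $K$, so $x_j=\sum_i\lambda_{ij}g_i$ with $\sum_i|\lambda_{ij}|\le1$. The second inclusion says that each $g_i$ is $a$ times a convex combination of the $\pm x_j$, so $g_i=\sum_j\mu_{ji}x_j$ with $\sum_j|\mu_{ji}|\le a$. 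Setting $G=[g_1,\ldots,g_{2n}]$, we obtain $G=GMN$ with $M=(\lambda_{ij})$ of size $2n\times n$ and $N=(\mu_{ji})$ of size $n\times 2n$. The columns of $M$ have $\ell_1$ norm at most $1$, and the columns of $N$ have $\ell_1$ norm at most $a$. Because $G$ has rank $n$, $MN$ acts as the identity on the row space of $G$. Equivalently, $MN-I_{2n}$ maps into $\ker G$, a uniformly random $n$-dimensional subspace. The task is to show that, with probability $1-Ce^{-cn}$, no such factorization exists when $a\le cn^{2/3}(\log 2n)^{-10}$.

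The first step is a variational normalization. Among all admissible representations, choose $(M,N)$ together with positive weights on the generators so as to minimize a convex functional, for instance a weighted $\ell_1$ cost of $N$ combined with the constraint on $M$. The first-order conditions give a dual certificate: a vector $y_i$ with $\langle y_i,g_i\rangle$ saturated on the support. The second variation, taken along rank-one perturbations, gives an exact identity bounding the energy of the recorded matrix $GM$ by quantities involving only $O(n)$ active entries. This yields sparsity: all but $O(n)$ coefficients vanish, so each $x_j$ is a combination of few generators, up to a controlled remainder. I would record the matrix $A=GMN-G$, which is $0$, and the partial products, and then isolate a low-rank part coming from the heavy coefficients.

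The second step is to handle the low-rank completions. The identity $G=GMN$ means that $I_{2n}-MN$ has range inside the Gaussian kernel. For any fixed subspace of dimension $k\le n/2$, the Gaussian projection $G$ restricted to it is well conditioned with probability $1-e^{-cn}$. A union bound over a net of all $k$-dimensional coordinate-adapted subspaces then shows that a low-rank correction cannot make a "bad" $MN$ consistent with $G$ unless $k\gtrsim n$. This step costs the logarithmic losses.

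The third step, which I expect to be the main obstacle, is the remaining case. Choose adaptively a set $S$ of exceptional generators: those $g_i$ whose coefficient columns of $N$ have large $\ell_\infty$ mass, or which are used heavily by $M$. By sparsity, $|S|$ can be bounded by roughly $n^{2/3}$ up to log factors, and $S$ can be fixed in advance up to a union bound costing $e^{O(n^{2/3}\log n)}$. Off $S$, each remaining $g_i$ must equal a combination $\sum_j\mu_{ji}x_j$ with small coefficients. After conditioning on $\{g_s\}_{s\in S}$, this becomes a fixed-shift small-ball event: a fresh Gaussian vector must lie within distance about $a\cdot n^{-1/2}$-scaled from a point determined by the net. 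Its probability is at most $(Ca/n^{2/3})^{cn}$-type, which must beat the cardinality of two finite nets, one for the sparse coefficients and one for the shifts. The balance of sparsity level, net size, and small-ball exponent is exactly where $n^{2/3}$ emerges. I would first try to discretize at scale $n^{-1}$ and count nets of size $\exp(Cn\log n)$, then check that the small-ball bound for $n$ independent generators outside $S$ gives $\exp(-cn\log n)$ with a larger constant once $a\le cn^{2/3}(\log 2n)^{-10}$. Balancing these constants is the delicate point.
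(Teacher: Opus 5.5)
Your outline reuses the vocabulary of the paper's strategy but none of its quantitative mechanisms, and the final union bound you propose cannot close. With $a=cn^{2/3}(\log 2n)^{-10}$, your own small-ball estimate $(Ca/n^{2/3})^{cn}$ is only $\exp(-cn\log\log n)$. That loses to a net of size $\exp(Cn\log n)$, and no choice of constants repairs it.

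The nets are in fact much larger than that. Nothing supports the claim that all but $O(n)$ coefficients vanish. For a convex objective the second-order condition at a minimum holds automatically and carries no information. The paper uses a \emph{concave} entropy term precisely so that $\|M^{1/2}Q\|_{\F}^2-\lambda\sum_{j,i}W_iz_{ji}^2/h_{ji}\ge0$ becomes a genuine coercivity constraint along square-zero variations. Even then, the ordinary support bound it obtains is $S_0\le Ct^2\sigma L$. For the smallest threshold $\sigma\asymp L^6$ this is still of order $n^{4/3}$ up to logarithms, so the coefficient record has entropy of order $t^2\sigma L^2$. Failure probabilities exponential in $n$ are useless against that. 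The paper needs bounds like $e^{-cd^2}$ with $d\gg t$. These come from the upper tail $s_j\lesssim j/\sqrt q$ of the small singular values of a $q\times q$ Gaussian matrix, which fails with probability $Ce^{-cj^2}$, and from a fixed-shift small-ball estimate in dimension $b=sq\gtrsim r^2L^2$ against nets of entropy at most $Cb/L$.

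You also misplace the source of the exponent. The missing first step is the averaging reduction $\max_j\|X^{-1}g_j\|_1\ge c_0\sum_i\|X^{-T}e_i\|_2$, which follows from $\frac1m\sum_j|\langle g_j,u\rangle|\ge c_0\|u\|_2$. It removes the outer inclusion entirely; indeed, in your factorization $N=X^{-1}G$ is determined by $M$, so there is nothing to optimize over $N$. After the weighted minimization and recording of the ordinary coefficients, one gets $Y=ZT+F+\Delta$ with $\tr T^2\le t$, $\rank F\le d$, and $\|Y^{-1}\|_{\F}^2\le t$. The completion lemma then projects onto $(\operatorname{range}Z_2)^\perp$ and intersects a $2d$-dimensional small singular subspace with $\ker F$. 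This works uniformly over all sample-dependent $F$, with no net, and forces $\tau_j^2\ge j/(\kappa dt)$, hence $n^2/(dt)\lesssim\tr T_0^2\le2t$. So low-rank corrections are excluded for $d\lesssim n^2/t^2$, and matching this against $d\gg t$ (required by the record entropy) is exactly $t^3\asymp n^2$.

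Your step 2 replaces this with a union bound over $k$-dimensional subspaces, which fails twice over. A net of such subspaces of $\R^n$ has entropy of order $kn$, which swamps the $e^{-cn}$ per-subspace bound you cite, and coordinate subspaces do not capture arbitrary corrections. The conclusion ``unless $k\gtrsim n$'' is also not the relevant threshold.

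Finally, nothing in your proposal bounds the exceptional set by $n^{2/3}$. In the paper its size can reach $C_0n/L^6$, and this large-rank case is the core of the proof. It requires a left test from $\tr(P_EM^3)\le512e/\sigma$, and a right test from the bottom singular subspace of the lower Gaussian block. It also requires the nuclear bound $\tr\sqrt M\lesssim\sqrt{rt}\,L^{3/2}$ to make the exceptional coefficient net affordable. Your proposal has no substitute for any of these.
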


Here $K^\circ = \{u:|\langle u,g_j\rangle|\le1\text{ for }j\le2n\}$.
Combining Theorem~\ref{thm:main} with~\cite{FNY} gives the
following consequence.

\begin{corollary} \label{cor:radius}
There are absolute constants $c,C>0$ such that, for every $n\ge2$,
$$
c\frac{n^{2/3}}{(\log(2n))^{10}}
\le R_\infty(n)\le Cn^{2/3}.
$$
Consequently,
$$
\lim_{n\to\infty}\frac{\log R_\infty(n)}{\log n} = \frac23.
$$
\end{corollary}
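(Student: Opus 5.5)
The corollary follows directly from Theorem~\ref{thm:main} and the upper bound of~\cite{FNY}; only the range of $n$ and the limit need care.

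\textbf{Lower bound for $n\ge n_0$.} Here I apply Theorem~\ref{thm:main}. Enlarging $n_0$ if necessary, we may assume $Ce^{-cn}<1$ for all $n\ge n_0$. Then the event in~\eqref{eq:main} has positive probability, so it is nonempty. Pick a realization $g_1,\dots,g_{2n}$ in the intersection of this event with the almost sure event that $K$ and $P$ are convex bodies. The body $P$ is origin-symmetric with nonempty interior, so it is admissible in the supremum defining $R_\infty(n)$. Hence
$$
R_\infty(n)\ge d_{\mathrm{BM}}(P,B_\infty^n)\ge c\frac{n^{2/3}}{(\log(2n))^{10}}.
$$

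\textbf{Lower bound for $2\le n<n_0$.} For these finitely many $n$ I use the trivial bound $R_\infty(n)\ge1$, which holds because every Banach--Mazur distance is at least $1$. The function $n^{2/3}(\log(2n))^{-10}$ is bounded on $\{2,\dots,n_0\}$. So after shrinking $c$, the lower bound holds for all $n\ge2$ with a single constant.

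\textbf{Upper bound.} The bound $R_\infty(n)\le Cn^{2/3}$ is exactly the theorem of Friedland, Naor, and Youssef~\cite{FNY}. If their statement only covers large $n$, the small cases follow from the classical estimate $R_\infty(n)\le n$ after enlarging $C$.

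\textbf{The limit.} Take logarithms in the two-sided bound:
$$
\frac23-\frac{10\log\log(2n)-\log c}{\log n}
\le\frac{\log R_\infty(n)}{\log n}
\le\frac23+\frac{\log C}{\log n}.
$$
Both correction terms tend to $0$ as $n\to\infty$, since $\log\log(2n)/\log n\to0$. This gives $\lim_{n\to\infty}\log R_\infty(n)/\log n=2/3$.

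The only step with any content is the first: passing from the probabilistic statement of Theorem~\ref{thm:main} to a deterministic body that witnesses the lower bound. That works because the event has positive probability for $n\ge n_0$.
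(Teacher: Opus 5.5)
Your proposal is correct and follows the paper's route. The lower bound comes from Theorem~\ref{thm:main}, since a realization in the positive-probability event gives an admissible body $P$; the finitely many $n<n_0$ are absorbed by shrinking $c$; the upper bound is quoted from \cite{FNY}; and the limit follows by taking logarithms, as you do.
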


The finitely many dimensions below $n_0$ are covered by decreasing
$c$. The logarithmic power in the lower bound is not optimized.

\subsection*{Proof strategy}

An inscribed cross-polytope has the form $XB_1^n$, where
$X = [x_1,\ldots,x_n]\in GL(n)$ and $x_i\in K$.
A uniform empirical Gaussian estimate reduces the distance bound
to a lower bound on the total Euclidean length of the dual basis,
$\beta(X) = \sum_i\|X^{-T}e_i\|_2$.
Assume that $\beta(X)\le t/2$.
In Section~\ref{sec:variational} we minimize a regularized version
of this quantity jointly over the coefficient representations of
the $x_i$ and positive diagonal weights $W$.
At a minimum, $Y = XW^{1/2}$ determines a positive matrix $M$ with
$$
YY^T = M^{-1},\quad \tr M\le t,\quad \tr W\le t.
$$
The first variation gives $g_j^TM^3g_j\lesssim t$ for every
generator. With the weights fixed, coefficient variations induced
by square-zero matrices have an exact quadratic second variation.
Its coercivity bounds the number of active coefficients.

We separate generators according to their energy $g_j^TMg_j$.
The coefficients on the ordinary generators admit a finite record;
the exceptional generators span a space of dimension $r$.
After fixing the record and conditioning only on the exceptional
generators, the normalized basis takes the form
$$
Y = ZT+F+\Delta,\quad
\tr T^2\le t,\quad \rank F\le r,\quad \|\Delta\|_{\mathrm F}\ll1,
$$
where $T\succeq0$ is deterministic and $Z$ is standard Gaussian.
The correction $F$ and the error $\Delta$ may depend on the full
sample. The completion lemma in Section~\ref{sec:gaussian} is
uniform over these choices. A projection onto the orthogonal
complement of independent Gaussian columns forces a spectral
envelope for $T$, and a dyadic estimate controls $\|Y^{-1}\|_*$.

The exceptional threshold is chosen adaptively in
Section~\ref{sec:records}. If $r$ is small, the spectral envelope
contradicts the trace budget. Otherwise the cubic energy bound
and one weighted spectral selection give a left test; small
singular directions of the lower Gaussian block give a right test.
Their combination forces a Gaussian compression to cancel an
exceptional correction. The nuclear-norm estimate makes the
exceptional coefficient grid small, and the dimensions make a
direct net of left frames affordable. Conditional on the lower
block, each net pair reduces to a standard Gaussian vector plus
a fixed shift. Section~\ref{sec:large} carries out this argument.

The exponent is already visible in the small-rank obstruction.
For a correction of rank at most $d$, the completion lemma gives
$$
\tr T_0^2\gtrsim\frac{n^2}{dt},
\quad T_0 = \max(T,t^{-1/2}I),
$$
whereas $\tr T_0^2\le t+n/t\le2t$ for $t\ge\sqrt n$.
The rank threshold is $d = t$ up to logarithmic factors.
Thus the decisive balance is $t^3\asymp n^2$.
The large-rank argument makes this scale valid for every possible
exceptional rank.
We take $t = n^{2/3}(\log(2n))^{-10}$ to leave room in all finite
unions.

\subsection*{Notation}

The constants $c,C>0$ are absolute and may change from line to line.
We write $A\lesssim B$ if $A\le CB$, and $A\asymp B$ if both
$A\lesssim B$ and $B\lesssim A$ hold. All logarithms are natural.
For matrices or maps between Euclidean spaces,
$\|\cdot\|_{\op}$, $\|\cdot\|_{\F}$, and $\|\cdot\|_*$ denote
the operator, Frobenius, and nuclear norms, respectively.
Singular values are ordered increasingly. For a matrix with $n$
columns we use the $n$ square roots of the eigenvalues of $A^TA$,
including zeros when discussing right singular subspaces.
A frame is a matrix with orthonormal columns.
The symbol $P_E$ denotes orthogonal projection onto $E$;
$A\preceq B$ means that $B-A$ is positive semidefinite.
Functions of a positive matrix, including spectral indicators
$1_I(M)$ and $\max(M,aI)$, are defined by spectral calculus.

\section{The Gaussian model and the basis reduction} \label{sec:model}

Put $m = 2n$, $N = 2m = 4n$, and $\Gamma = [g_1,\ldots,g_m]$.
List the signed generators as $y_j = g_j$, $y_{m+j} = -g_j$ for $j\le m$.
Throughout the proof set
\begin{align} \label{eq:parameters}
L = \log(2n),\quad t = \frac{n^{2/3}}{L^{10}},
\quad k = \lfloor tL^5\rfloor.
\end{align}
All subsequent assertions hold for sufficiently large $n$, after
the absolute constants have been fixed. In particular,
\begin{align} \label{eq:scales}
\sqrt n\le t\le n,\quad k = o(n),\quad n = o(k^2),\quad
kt^2\le n^2L^{-25},\quad t^2L^8 = o(k^2).
\end{align}

\begin{lemma} \label{lem:preliminary}
There are fixed constants $c_0>0$ and $C_0\ge2$ such that the
following event $\mathcal G$ has probability at least $1-Ce^{-cn}$:
\begin{align} \label{eq:good}
\begin{gathered}
\|\Gamma\|_{\op}^2\le C_0n,\quad
\frac1m\sum_{j = 1}^m|\langle g_j,u\rangle|
\ge c_0\|u\|_2\quad(u\in\R^n),\\
\frac n4 I\preceq\Gamma_I^T\Gamma_I\preceq4nI
\quad(1\le |I|\le d_*),\quad
d_* = \left\lceil\frac{4C_0n}{L^6}\right\rceil.
\end{gathered}
\end{align}
Here $I\subset\{1,\ldots,m\}$ and $\Gamma_I$ consists of the
columns indexed by $I$.
\end{lemma}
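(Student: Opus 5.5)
We prove the three conditions in \eqref{eq:good} separately, each with failure probability at most $Ce^{-cn}$, and take a union bound. The constant $C_0$ is fixed first by the operator-norm bound, and $d_*$ is then determined by it.

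\emph{Operator norm.} $\Gamma$ is an $n\times 2n$ standard Gaussian matrix, so Gordon's inequality together with Gaussian concentration of the Lipschitz function $\Gamma\mapsto\|\Gamma\|_{\op}$ give $\|\Gamma\|_{\op}\le\sqrt n+\sqrt{2n}+s$ with probability at least $1-e^{-s^2/2}$. Taking $s=\sqrt n$ yields $\|\Gamma\|_{\op}^2\le C_0 n$ with, say, $C_0=(2+\sqrt2)^2$, off an event of probability $e^{-n/2}$.

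\emph{Uniform $\ell_1$ lower bound.} For a fixed unit vector $u$, the variables $|\langle g_j,u\rangle|$ are i.i.d.\ with mean $\sqrt{2/\pi}$ and are subgaussian. By Bernstein's (or Hoeffding's) inequality,
$$
\frac1m\sum_{j\le m}|\langle g_j,u\rangle|\ge\frac12\sqrt{2/\pi}
$$
with probability at least $1-e^{-cm}$. Take an $\varepsilon$-net $\mathcal N$ of the sphere with $|\mathcal N|\le(3/\varepsilon)^n$. On the event of the previous paragraph, for unit $u$ and $v\in\mathcal N$ with $\|u-v\|_2\le\varepsilon$,
$$
\frac1m\sum_j|\langle g_j,u-v\rangle|\le\frac1{\sqrt m}\|\Gamma^T(u-v)\|_2\le\sqrt{C_0/2}\,\varepsilon .
$$
We choose $\varepsilon$ small enough that this is at most $\frac14\sqrt{2/\pi}$. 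We then need $cm=2cn>n\log(3/\varepsilon)$, and this is the one delicate point. Since $\varepsilon$ is an absolute constant, the concentration rate must beat $\log(3/\varepsilon)$. The remedy is to use the fact that the deviation probability for the lower tail of a sum of nonnegative variables is $e^{-c(\delta)m}$ with $c(\delta)\to\infty$ as the threshold $\delta\to0$. Indeed, $\Prob\{|\langle g,u\rangle|\le\eta\}\le\eta$, so a Chernoff bound shows that at least half of the $|\langle g_j,u\rangle|$ exceed $\eta$ with probability at least $1-(4\eta)^{m/2}$. This bound beats $(3/\varepsilon)^n$ once $\eta$ is small relative to $\varepsilon$. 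The resulting lower bound is $c_0=\eta/2-\sqrt{C_0/2}\,\varepsilon$, and choosing $\varepsilon=c\eta$ makes $c_0>0$. Homogeneity then extends the bound to all $u\in\R^n$.

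\emph{Restricted isometry.} For $|I|=d\le d_*$, the matrix $\Gamma_I$ is $n\times d$ Gaussian. The Davidson--Szarek bound gives
$$
\sqrt n-\sqrt d-s\le s_{\min}(\Gamma_I)\le s_{\max}(\Gamma_I)\le\sqrt n+\sqrt d+s
$$
with probability at least $1-2e^{-s^2/2}$. Since $d_*\le 4C_0nL^{-6}+1=o(n)$, taking $s=\sqrt n/4$ gives singular values in $[\sqrt n/2,2\sqrt n]$ for large $n$, with failure probability $2e^{-n/32}$. It suffices to treat $|I|=d_*$, because singular values of submatrices interlace. The union over $\binom{m}{d_*}\le(2en/d_*)^{d_*}=\exp(O(nL^{-6}\log L))=e^{o(n)}$ sets therefore costs only $e^{o(n)}$, leaving a total failure probability $e^{-cn}$. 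Finitely many small $n$ are absorbed into $C$.
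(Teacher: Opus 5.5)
Your operator-norm and restricted-isometry parts are correct. They replace the paper's net arguments with Gordon's bound plus Gaussian concentration (Davidson--Szarek), and the interlacing reduction to $|I|=d_*$ together with $\log\binom{m}{d_*}=o(n)$ is sound.

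The gap is in the uniform $\ell_1$ lower bound: your two requirements on $\varepsilon$ and $\eta$ cannot both hold. Since $m=2n$, the Chernoff failure probability $(4\eta)^{m/2}$ equals $(4\eta)^n$. The union over the net therefore costs
\[
\Bigl(\frac{3}{\varepsilon}\Bigr)^n(4\eta)^n=\Bigl(\frac{12\eta}{\varepsilon}\Bigr)^n,
\]
which is small only if $\varepsilon>12\eta$. The net approximation, however, needs $\sqrt{C_0/2}\,\varepsilon<\eta/2$, that is, $\varepsilon<\eta/\sqrt{2C_0}\le\eta/2$. You in fact ask for both: ``$\eta$ small relative to $\varepsilon$'' and ``$\varepsilon=c\eta$''. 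Shrinking $\eta$ does not help, because the probability scales as $\eta^{n}$ and the net size as $\eta^{-n}$.

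The ``at least half'' threshold discards exactly the oversampling $m/n=2$ on which the statement depends. For $m=n$ the inequality is false: typically $s_{\min}(\Gamma)\asymp n^{-1/2}$, so $m^{-1}\|\Gamma^Tu\|_1\le n^{-1/2}s_{\min}(\Gamma)\lesssim n^{-1}$ for the corresponding singular vector $u$.

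The repair is to require more than $n$ small coordinates for failure. Ask only that at least $m/4$ of the values $|\langle g_j,u\rangle|$ exceed $\eta$. Failure then needs more than $3m/4$ values in $[0,\eta]$, which has probability at most $2^m\eta^{3m/4}=(4\eta^{3/2})^n$. With $\varepsilon=\eta/(8\sqrt{C_0/2})$, the union bound gives $(C\eta^{1/2})^n\le e^{-n}$ for a small absolute $\eta$, and the lower bound becomes $\eta/4-\eta/8=\eta/8$.

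The paper achieves the same in one step. It bounds the full small-ball probability $\Prob\{m^{-1}\|\Gamma^Tu\|_1\le a\}\le(Ca)^m$ using the Gaussian density and the volume of the $\ell_1$ ball. The exponent $m=2n$ then beats a net of size $(C/a)^n$ and leaves $(Ca)^n$.
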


\begin{proof}
For a standard Gaussian $g\in\R^n$,
$\E\exp(u\|g\|_2^2) = (1-2u)^{-n/2}$ for $u<1/2$.
Exponential Markov bounds therefore give the usual exponential
tails for $\|g\|_2^2$.
For each fixed $z\in S^{m-1}$, $\Gamma z$ is standard Gaussian.
A $1/2$-net of $S^{m-1}$ has at most $5^m$ points and controls
$\|\Gamma\|_{\op}$ up to a factor of two. The first assertion
follows by choosing $C_0$ sufficiently large.

For fixed $u\in S^{n-1}$, $\Gamma^Tu$ is standard Gaussian in
$\R^m$. Its density and the volume of the $\ell_1$ ball give
$$
\Prob\{m^{-1}\|\Gamma^Tu\|_1\le a\}
\le(2\pi)^{-m/2}\frac{2^m(ma)^m}{m!}\le(Ca)^m.
$$
On the operator-norm event, the function
$u\mapsto m^{-1}\|\Gamma^Tu\|_1$ has Lipschitz constant
$\|\Gamma\|_{\op}/\sqrt m\le C$.
If its minimum on the unit sphere is below $a/2$, its value is
at most $a$ at a point of a fixed $a/(2C)$-net. Such a net has
at most $(C/a)^n$ points. Since $m = 2n$, the resulting probability
is at most $(C/a)^n(Ca)^{2n}\le e^{-cn}$ for small fixed $a>0$.
This proves the second assertion, with $c_0 = a/2$.

Fix $I$ with $|I| = h\le d_*$. At every fixed point of a
$1/8$-net of $S^{h-1}$, the bounds
$n/2\le\|\Gamma_Iz\|_2^2\le2n$ fail with probability at most
$Ce^{-cn}$. The net has at most $17^h$ points.
On its good event,
$$
\|\Gamma_I\|_{\op}
\le\frac{\sqrt{2n}}{1-1/8}<2\sqrt n,
\quad
s_1(\Gamma_I)
\ge\sqrt{n/2}-\frac18\frac{\sqrt{2n}}{1-1/8}
>\frac12\sqrt n.
$$
Finally,
$$
\log\left(\sum_{h = 1}^{d_*}\binom mh17^h\right)
\le C d_*\log\frac{en}{d_*} = o(n).
$$
A union bound proves the simultaneous Gram estimates.
\end{proof}

For an inscribed basis $X\in GL(n)$, meaning $Xe_i\in K$ for
all $i$, set $\beta(X) = \sum_i\|X^{-T}e_i\|_2$.
The two inclusions defining the distance give
\begin{align} \label{eq:distance}
d_{\mathrm{BM}}(K,B_1^n)
& = \inf_{\substack{X\in GL(n)\\Xe_i\in K}}
\max_{j\le m}\|X^{-1}g_j\|_1 \ge c_0\inf_{\substack{X\in GL(n)\\Xe_i\in K}}\beta(X)
\quad\text{ on }\mathcal G.
\end{align}
Indeed, averaging the maximum over $j$ and applying
\eqref{eq:good} to $X^{-T}e_i$ proves the inequality.
It remains to exclude inscribed bases with $\beta(X)\le t/2$.
The deterministic construction in the next section is applied
to any realization in $\mathcal G$ admitting such a basis.

\section{Variational sparsity} \label{sec:variational}

Work on the event \(\mathcal G\) in \eqref{eq:good}, and suppose that
an inscribed basis satisfies \(\beta(X)\le t/2\).
We minimize a regularized objective with a concave entropy term.
Its second variation controls the number of nonzero coefficients, while
independent weight variables make the inverse-matrix term exactly
quadratic along the variations used below.

Let
$$
\mathcal P = \left\{p\in\R^{N\times n}:p_{ji}\ge0,
 \sum_{j = 1}^N p_{ji} = 1\quad(1\le i\le n)\right\}.
$$
For \(p\in\mathcal P\), write
$$
x_i = \sum_{j = 1}^N p_{ji}y_j,
\quad X = [x_1,\ldots,x_n].
$$
Set
$$
h_{ji} = p_{ji}+N^{-1},\quad
H(p_i) = \sum_{j = 1}^N h_{ji}\log\frac2{h_{ji}},\quad
\lambda = \frac1{10\log(2N)}.
$$
Since \(\sum_jh_{ji} = 2\), the vector \(h_i/2\) is a probability
vector and \(0\le H(p_i)\le2\log N\).
For a positive diagonal matrix \(W = \diag(W_1,\ldots,W_n)\), define
\begin{align} \label{eq:joint}
\Psi(p,W) = \frac12\sum_{i = 1}^n
\left[\frac{\|X^{-T}e_i\|_2^2}{W_i}
+W_i\bigl(1+2\lambda H(p_i)\bigr)\right],
\end{align}
and set \(\Psi(p,W) = +\infty\) when \(X\) is singular.

\begin{lemma} \label{var:minimum}
The function \(\Psi\) attains a finite minimum of value at most \(t\).
At any such minimum, put
$$
v_i = X^{-T}e_i,\quad w_i = \|v_i\|_2,\quad
M = X^{-T}W^{-1}X^{-1} = \sum_{i = 1}^n W_i^{-1}v_iv_i^T,
\quad Y = XW^{1/2}.
$$
Then
\begin{align} \label{eq:metric}
\frac{w_i}{2}\le W_i\le w_i,\quad
\tr M = \Psi(p,W)\le t,\quad \tr W\le t,\quad
YY^T = M^{-1},
\end{align}
and, for every signed generator \(y_j\) and every \(i\),
\begin{align} \label{eq:cubic}
|y_j^TMv_i|\le4w_i,\quad
y_j^TM^3y_j
 = \sum_{i = 1}^n W_i^{-1}(y_j^TMv_i)^2\le16t.
\end{align}
\end{lemma}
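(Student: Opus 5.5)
The plan is to prove existence and the value bound by exhibiting one good feasible point, then to read off each relation in \eqref{eq:metric} and \eqref{eq:cubic} from the first-order conditions at a minimum: in $W$ for \eqref{eq:metric}, and in $p$ along segments toward single generators for \eqref{eq:cubic}.

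\emph{Finite minimum and the value bound.} Take the inscribed basis $X$ with $\beta(X)\le t/2$. Each $x_i\in K=\conv\{y_j\}$, so $x_i=\sum_jp_{ji}y_j$ for some $p\in\mathcal P$. Since $0\le H\le 2\log N$ and $\lambda=1/(10\log(2N))$, we have $1\le 1+2\lambda H(p_i)\le 7/5$. Choosing $W_i=w_i$ gives
$$\Psi\le\tfrac12\sum_i\bigl(w_i+\tfrac75w_i\bigr)=\tfrac65\beta(X)\le t.$$
For attainment, minimize first in $W$ for fixed $p$. Explicitly,
$$W_i=\frac{w_i}{\sqrt{1+2\lambda H(p_i)}},\qquad \inf_W\Psi(p,W)=\sum_iw_i\sqrt{1+2\lambda H(p_i)}\ge\beta(X).$$
Here $\beta(X)\to\infty$ as $X$ approaches the singular set. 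Hence the sublevel set $\{\Psi\le t\}$ confines $p$ to a compact subset of $\mathcal P$ on which $X$ is invertible with bounded inverse. On that set the optimal $W$ ranges over a compact box of positive diagonals, and $\Psi$ is continuous there, so a minimum exists.

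\emph{The relations \eqref{eq:metric}.} At a minimum, $W$ is optimal for the fixed $p$, so $W_i=w_i/\sqrt{1+2\lambda H(p_i)}$. Because $\sqrt{7/5}<2$, this gives $w_i/2\le W_i\le w_i$. Next,
$$\tr M=\sum_iW_i^{-1}w_i^2=\sum_iw_i\sqrt{1+2\lambda H(p_i)}=\Psi(p,W)\le t,$$
and $\tr W\le\sum_iw_i\le\tr M\le t$. Finally $YY^T=XWX^T=(X^{-T}W^{-1}X^{-1})^{-1}=M^{-1}$.

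\emph{The cubic bound \eqref{eq:cubic}.} For fixed $i$ and a signed generator $y_j$, vary only column $i$ of $p$ by $p_i\mapsto(1-\varepsilon)p_i+\varepsilon e_j$ with $\varepsilon\in[0,1]$. This stays in $\mathcal P$ and changes $x_i$ to $x_i+\varepsilon u$ with $u=y_j-x_i$. Since $\sum_kW_k^{-1}\|X^{-T}e_k\|_2^2=\tr(X^{-T}W^{-1}X^{-1})$ and $d(X^{-1})=-X^{-1}ue_i^TX^{-1}$, the derivative of the first term of $\Psi$ is $-v_i^TMu$. Moreover $v_i^TMx_i=e_i^TX^{-1}X^{-T}W^{-1}e_i=w_i^2/W_i$.

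The entropy term has one-sided derivative $\lambda W_i\sum_j(\log(2/h_{ji})-1)\delta h_{ji}$. Since $h_{ji}\in[N^{-1},2]$, the factors $\log(2/h_{ji})$ lie in $[0,\log(2N)]$, and $\|\delta h_i\|_1\le2$. So this derivative is at most $2\lambda W_i(\log(2N)+1)\le W_i/2$.

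Minimality forces the right derivative at $\varepsilon=0$ to be nonnegative. This gives
$$v_i^TMy_j\le\frac{w_i^2}{W_i}+\frac{W_i}2\le2w_i+w_i\le4w_i.$$
Applying the same bound to $-y_j$, which is also a signed generator, gives $|y_j^TMv_i|\le4w_i$. Expanding the middle factor $M=\sum_iW_i^{-1}v_iv_i^T$ in $M^3=M\cdot M\cdot M$ yields the identity
$$y_j^TM^3y_j=\sum_iW_i^{-1}(y_j^TMv_i)^2\le16\sum_iw_i^2/W_i=16\tr M\le16t.$$

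\emph{Main obstacle.} The delicate points are the attainment argument, namely the coercivity near singular $X$ and the confinement of $W$ to a compact set, and the one-sided control of the entropy derivative. The latter works because each $h_{ji}\ge N^{-1}$ keeps $\log(2/h_{ji})$ bounded by $\log(2N)$, which the choice of $\lambda$ absorbs.
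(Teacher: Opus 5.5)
Your proof is correct and follows essentially the same route as the paper. You eliminate $W$ explicitly to get the metric relations and coercivity near singular $X$. For the cubic bounds you use the one-sided first variation of column $i$ toward a signed generator, with $v_i^TMx_i=w_i^2/W_i$ and a bounded entropy derivative.

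The only differences are cosmetic. You bound the minimum value by $\tfrac65\beta(X)$ via $W_i=w_i$, where the paper uses $2\beta(X)$. You also control the entropy derivative crudely through $\|\delta h_i\|_1\le 2$, instead of computing $H_i'\le\log(2N)$ exactly.
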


\begin{proof}
For fixed \(p\) with invertible \(X\), minimizing in the weights gives
$$
W_i = \frac{w_i}{\sqrt{1+2\lambda H(p_i)}},\quad
\inf_W\Psi(p,W)
 = \sum_i w_i\sqrt{1+2\lambda H(p_i)}.
$$
The latter expression lies between \(\beta(X)\) and \(2\beta(X)\).
It tends to infinity as \(X\) approaches a singular matrix, since
\(\beta(X)\ge\|X^{-1}\|_{\op}\). Thus compactness of \(\mathcal P\)
gives a finite attained minimum. The assumed inscribed basis has a
representation in \(\mathcal P\), so the minimum is at most \(t\).
The weight equations give
\(w_i^2/W_i = W_i(1+2\lambda H(p_i))\), which proves
\eqref{eq:metric}.

For the first variation, hold \(W\) fixed and move column \(p_i\)
toward the vertex corresponding to \(y_a\). Writing
\(d = y_a-x_i\), one has
\(v_\ell'(0) = -\langle v_\ell,d\rangle v_i\), and therefore
$$
0\le\Psi'(0+)
 = -d^TMv_i+\lambda W_i H_i',\quad
H_i' = \sum_jp_{ji}\log h_{ji}-\log h_{ai}
\le\log(2N).
$$
Here the shifted entropy is differentiable even when \(p_{ai} = 0\).
Since \(x_i^TMv_i = w_i^2/W_i\le2w_i\), it follows that
\(y_a^TMv_i\le4w_i\). Applying the same argument to \(-y_a\)
gives the absolute-value bound in \eqref{eq:cubic}. Its second bound
follows from the displayed representation of \(M\) and
\(\sum_iw_i^2/W_i = \tr M\le t\).
\end{proof}

Fix an arbitrary such minimum. All conclusions below hold for every
minimum of value at most \(t\). Call an entry
\((j,i)\) \emph{active} if \(p_{ji}>0\).
The following identity will be used for both kinds of support bound.

\begin{lemma}[Second variation] \label{var:curvature}
Let \(z = (z_{ji})\) vanish outside the active entries and satisfy
\(\sum_jz_{ji} = 0\) for every \(i\). Set
$$
Q = \sum_{j,i}z_{ji}y_jv_i^T.
$$
If \(Q^2 = 0\), then, with \(W\) fixed,
\begin{align} \label{eq:hessian}
0\le\left.\frac{d^2}{ds^2}\Psi(p+sz,W)\right|_{s = 0}
 = \|M^{1/2}Q\|_{\F}^2
-\lambda\sum_{j,i}\frac{W_i z_{ji}^2}{h_{ji}}.
\end{align}
\end{lemma}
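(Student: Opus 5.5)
We compute the second derivative of \(s\mapsto\Psi(p+sz,W)\) at \(s=0\) directly, splitting \(\Psi\) into the inverse-matrix term and the entropy term. Along \(p(s)=p+sz\) the basis moves linearly: \(X(s)=X+sD\) with \(D=\sum_{j,i}z_{ji}y_je_i^T\).

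\emph{Step 1: the inverse is linear in \(s\).} Since \(v_i=X^{-T}e_i\), we have \(\sum_i v_ie_i^T=X^{-T}\). Hence
\[
Q=\sum_{j,i}z_{ji}y_jv_i^T=DX^{-1}.
\]
Then \(X(s)=(I+sQ)X\). The hypothesis \(Q^2=0\) gives \((I+sQ)^{-1}=I-sQ\) exactly. Therefore \(X(s)^{-1}=X^{-1}(I-sQ)\) and
\[
v_i(s)=X(s)^{-T}e_i=(I-sQ^T)v_i .
\]

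\emph{Step 2: the inverse-matrix term is an exact quadratic.} The first part of \(\Psi\) is
\[
\tfrac12\sum_i W_i^{-1}\|(I-sQ^T)v_i\|_2^2
=\tfrac12\tr\bigl((I-sQ^T)M(I-sQ)\bigr),
\]
where we used \(M=\sum_iW_i^{-1}v_iv_i^T\). This is a polynomial of degree two in \(s\), and its second derivative is \(\tr(Q^TMQ)=\|M^{1/2}Q\|_{\F}^2\).

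\emph{Step 3: the entropy term.} The rest of \(\Psi\) is \(\tfrac12\sum_iW_i(1+2\lambda H(p_i))\). Since \(h_{ji}=p_{ji}+N^{-1}\) is affine in \(s\), and \(\frac{d^2}{ds^2}\,h\log(2/h)=-\dot h^2/h\), the second derivative of this part is
\[
-\lambda\sum_{j,i}W_iz_{ji}^2/h_{ji}.
\]

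\emph{Step 4: nonnegativity.} We must check that \(s\mapsto\Psi(p+sz,W)\) has a local minimum at \(s=0\), and for this \(p+sz\) must stay in \(\mathcal P\) for small \(|s|\) of both signs. The column sums are preserved because \(\sum_jz_{ji}=0\). Positivity is preserved because \(z\) is supported on active entries, where \(p_{ji}>0\), so a small two-sided move keeps all entries nonnegative. \(X(s)\) stays invertible near \(0\), so \(\Psi\) is smooth there. Since \((p,W)\) is a global minimum of \(\Psi\), the restriction to this line has an interior minimum at \(s=0\), and its second derivative is \(\ge0\). This proves \eqref{eq:hessian}.

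The only point needing care is Step 1, where the square-zero condition turns the Neumann series into an exact two-term expression; everything else is routine. The definition of \(Q\) matches \(DX^{-1}\) precisely because \(\sum_iv_ie_i^T=X^{-T}\), so that \(e_i^TX^{-1}=v_i^T\).
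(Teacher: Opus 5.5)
Your proposal is correct and follows the paper's proof essentially step by step. You use the factorization \(X(s)=(I+sQ)X\) with the exact inverse \(I-sQ\) from \(Q^2=0\), the resulting exact quadratic \(\tfrac12\tr\bigl((I-sQ^T)M(I-sQ)\bigr)\) for the inverse term, the \(-\dot h^2/h\) computation for the entropy term, and two-sided admissibility of the line on active entries to invoke minimality. You also make explicit a detail the paper leaves implicit, namely \(Q=DX^{-1}\) via \(e_i^TX^{-1}=v_i^T\).
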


\begin{proof}
Both signs of sufficiently small \(s\) preserve nonnegativity and
the column sums. Moreover,
$$
X(s) = (I+sQ)X,\quad (I+sQ)^{-1} = I-sQ,
\quad v_i(s) = v_i-sQ^Tv_i.
$$
Thus the inverse term in \eqref{eq:joint} has second derivative
\(\sum_iW_i^{-1}\|Q^Tv_i\|_2^2 = \|M^{1/2}Q\|_{\F}^2\).
The entropy term has second derivative
\(-\lambda\sum_{j,i}W_i z_{ji}^2/h_{ji}\).
Minimality proves \eqref{eq:hessian}.
\end{proof}

To apply this curvature bound, fix \(\sigma\ge L^6\) and an unsigned
index set \(J\subset\{1,\ldots,m\}\) such that
$$
g_j^TMg_j\ge\sigma t/2\quad(j\in J),\quad
g_j^TMg_j\le\sigma t\quad(j\notin J).
$$
Write \(r = |J|\), \(E = \operatorname{span}\{g_j:j\in J\}\),
\(e = \tr(P_EM)\), and \(\widetilde J = J\cup\{m+j:j\in J\}\).
An active entry is \emph{exceptional} if \(j\in\widetilde J\),
and \emph{ordinary} otherwise; let \(S_1,S_0\) be their respective
numbers. Since
$$
\sum_{j = 1}^m g_j^TMg_j
 = \tr(M\Gamma\Gamma^T)\le C_0nt,
$$
one has \(r\le2C_0n/\sigma\le d_*\). Consequently
\eqref{eq:good} gives
\((n/4)P_E\preceq\Gamma_J\Gamma_J^T\preceq4nP_E\).
Together with \eqref{eq:cubic}, this yields
\begin{align} \label{eq:head}
\dim E = r,\quad e\ge\frac{r\sigma t}{8n},\quad
\tr(P_EM^3)\le\frac{64rt}{n}\le\frac{512e}{\sigma}.
\end{align}
These statements include \(J = \varnothing\), with \(E = \{0\}\).

\begin{lemma}[Support bounds] \label{lem:support}
For every \(\sigma,J\) as above, the minimizing representation obeys
\begin{align} \label{eq:support}
S_0\le Ct^2\sigma L,\quad
S_1\le r^2+n+C\sqrt{nLe} \tr\sqrt M.
\end{align}
\end{lemma}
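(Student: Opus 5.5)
The plan is to apply Lemma~\ref{var:curvature} to suitable linear spaces of admissible perturbations $z$ on which $Q^2=0$ holds automatically. A trace (eigenvalue-counting) argument then turns the coercivity inequality \eqref{eq:hessian} into a dimension bound. The counting principle is the following. Let $A(z)=\|M^{1/2}Q\|_{\F}^2$ and $B(z)=\sum_{j,i}W_iz_{ji}^2/h_{ji}$, both quadratic forms on the active coordinates. If $\lambda B\le A$ on a subspace $V$, then $B^{-1/2}AB^{-1/2}$ has at least $\dim V$ eigenvalues $\ge\lambda$. Hence
$$
\dim V\le\lambda^{-1}\tr(B^{-1/2}AB^{-1/2})
 =\lambda^{-1}\sum_{(j,i)}\frac{h_{ji}}{W_i}\,y_j^TMy_j\,w_i^2 ,
$$
where the sum runs over the coordinates allowed in $V$. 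Since $\lambda^{-1}\asymp L$ and $w_i^2/W_i\le2w_i$, everything reduces to choosing $V$ with small codimension among the allowed active entries.

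\emph{Ordinary entries.} I work column by column. For a fixed $i$, let $z$ be supported on the ordinary active entries of column $i$, with $\sum_jz_{ji}=0$ and $\sum_jz_{ji}\langle v_i,y_j\rangle=0$. Then $Q=uv_i^T$ with $u=\sum_jz_{ji}y_j$ and $v_i^Tu=0$, so $Q^2=0$. This space has codimension at most $2$ in the ordinary active entries of column $i$. The counting bound, together with $y_j^TMy_j\le\sigma t$ for ordinary $j$ and $\sum_jh_{ji}=2$, gives at most $2+CL\sigma tw_i$ ordinary active entries in column $i$. Summing over $i$ and using $\sum_iw_i=\beta(X)\le 2\tr W\le 2t$ yields $S_0\le 2n+CL\sigma t^2\le Ct^2\sigma L$, because $n\le t^2$ by \eqref{eq:scales}.

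\emph{Exceptional entries.} Here I take all exceptional active entries at once, so $u_i=\sum_{j\in\widetilde J}z_{ji}y_j\in E$ and $Q=\sum_iu_iv_i^T$ has range in $E$. Imposing $QE=0$ forces $Q^2=0$. This amounts to requiring that the linear map $e\mapsto\sum_iu_i\langle v_i,e\rangle$ from $E$ to $E$ vanishes, which is $r^2$ scalar conditions. Together with the $n$ column-sum conditions, the admissible space $V$ has dimension at least $S_1-r^2-n$. It then remains to show $\dim V\le C\sqrt{nLe}\,\tr\sqrt M$.

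\emph{Main obstacle.} The naive trace bound gives $\lambda^{-1}\sum h_{ji}y_j^TMy_jw_i$ over exceptional $j$, where $y_j^TMy_j$ can be as large as $\|M\|$. This is too crude. To fix it, I first use that $\operatorname{range}Q\subset E$, so $A(z)=\|(P_EMP_E)^{1/2}Q\|_{\F}^2$, and that $\sum_{j\in J}g_j^TMg_j=\tr(M\Gamma_J\Gamma_J^T)\le4ne$ by \eqref{eq:good}. I then expect to need a reweighting. The idea is to rewrite $Q=\sum_iu_iv_i^T$ in the $M$-eigenbasis, replacing $w_i^2/W_i$ by a quantity comparable to $\langle Mv_i,v_i\rangle^{1/2}$-type weights. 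I would then run the counting with an extra free scaling parameter $\theta>0$, splitting the form into a part controlled by $\theta\cdot ne$ and a part controlled by $\theta^{-1}L(\tr\sqrt M)^2$. Optimizing in $\theta$ should give the geometric mean $\sqrt{nLe}\,\tr\sqrt M$. I am least sure of the precise form of this reweighting: the step where the bound $|y_j^TMv_i|\le4w_i$ from \eqref{eq:cubic} must enter, rather than the trivial bound on $y_j^TMy_j$, is where I expect the real work to lie.
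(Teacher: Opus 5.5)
Your ordinary-entry argument is correct and is essentially the paper's. Per column you take a rank-one $Q=uv_i^T$ with at most two constraints, then make a trace count using $y_j^TMy_j\le\sigma t$, $\sum_jh_{ji}=2$, and $\sum_iw_i^2/W_i\le t$ (or $\sum_iw_i\le2t$). Your setup for the exceptional entries is also exactly the paper's: range in $E$, the $r^2$ conditions $Q|_E=0$, and the $n$ column sums, so $\dim V\ge S_1-r^2-n$.

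\textbf{The gap.} The remaining inequality $\dim V\le C\sqrt{nLe}\,\tr\sqrt M$ is not proved, and the route you sketch cannot give it. Write the normalized coordinates $z_{ji}=\sqrt{h_{ji}/W_i}\,\xi_{ji}$ and $\mathcal T\xi=M^{1/2}Q$. Your counting principle then reads $\lambda\dim V\le\|\mathcal T\|_{\F}^2$. It loses a full factor $\lambda^{-1}$. The only available estimate is $\|\mathcal T\|_{\F}^2\le16\,net$, from $\sum_{j\in\widetilde J}y_j^TMy_j\le8ne$ and $\sum_ih_{ji}w_i^2/W_i\le2t$, which is the crude bound you already rejected. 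No reweighting inside this principle can help: replacing your form $A$ by a majorant, or $B$ by a minorant, only increases $\tr(B^{-1/2}AB^{-1/2})$. Also, \eqref{eq:cubic} plays no role in this lemma.

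\textbf{The missing idea.} Count singular values instead of their squares. On $V$, \eqref{eq:hessian} says $\|\mathcal T\xi\|_{\F}\ge\sqrt\lambda\,\|\xi\|_2$. By min--max, $\mathcal T$ has at least $\dim V$ singular values at least $\sqrt\lambda$, so $(S_1-r^2-n)_+\sqrt\lambda\le\|\mathcal T\|_*$. This loses only $\lambda^{-1/2}\asymp\sqrt L$.

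To bound the nuclear norm, expand in an eigenbasis $Mf_\ell=\mu_\ell f_\ell$: $\mathcal T\xi=\sum_\ell(\mathsf UC_\ell\xi)f_\ell^T$. Here $\mathsf U$ has columns $M^{1/2}y_j$ for $j\in\widetilde J$, and $(C_\ell\xi)_j=\sum_{i:p_{ji}>0}\sqrt{h_{ji}/W_i}\,\langle v_i,f_\ell\rangle\xi_{ji}$.
\begin{itemize}
\item The Gram bound in \eqref{eq:good} gives $\|\mathsf U\|_{\F}^2=2\tr(M\Gamma_J\Gamma_J^T)\le8ne$.
\item The column mass $\sum_jh_{ji}=2$ together with $M=\sum_iW_i^{-1}v_iv_i^T$ gives $\|C_\ell\|_{\F}^2\le2\mu_\ell$.
\item Since $x\mapsto xf_\ell^T$ is a Frobenius isometry, the nuclear triangle inequality and $\|\mathsf UC_\ell\|_*\le\|\mathsf U\|_{\F}\|C_\ell\|_{\F}$ yield $\|\mathcal T\|_*\le4\sqrt{ne}\,\tr\sqrt M$.
\end{itemize}
The factor $\tr\sqrt M=\sum_\ell\sqrt{\mu_\ell}$ appears precisely because norms, not squared norms, are summed over $\ell$. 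If your $\theta$-optimization is meant to produce this geometric mean, its natural form is $\|\mathsf UC_\ell\|_*\le\frac12\bigl(\theta_\ell\|\mathsf U\|_{\F}^2+\theta_\ell^{-1}\|C_\ell\|_{\F}^2\bigr)$. That step only becomes available after you switch from the trace count to the nuclear-norm count.
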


\begin{proof}
For column \(i\), let
\(A_i = \{j\notin\widetilde J:p_{ji}>0\}\) and \(a_i = |A_i|\).
Vary only these entries, with
\(z_{ji} = \sqrt{h_{ji}} \xi_j\), and impose
$$
\sum_{j\in A_i}z_{ji} = 0,\quad
v_i^Td = 0,\quad d = \sum_{j\in A_i}z_{ji}y_j.
$$
These are at most two linear constraints on \(\xi\).
Then \(Q = dv_i^T\) satisfies \(Q^2 = 0\), so \eqref{eq:hessian} gives
\(\|\mathcal T_i\xi\|_2^2\ge\lambda\|\xi\|_2^2\) on that
subspace, where
$$
\mathcal T_i\xi = 
\frac{w_i}{\sqrt{W_i}}M^{1/2}
\sum_{j\in A_i}\sqrt{h_{ji}} \xi_jy_j.
$$
Because \(\sum_jh_{ji} = 2\) and the entries are ordinary,
$$
\lambda(a_i-2)_+\le\|\mathcal T_i\|_{\F}^2
 = \frac{w_i^2}{W_i}\sum_{j\in A_i}h_{ji}y_j^TMy_j
\le2\frac{w_i^2}{W_i}\sigma t.
$$
Summing and using \(\tr M\le t\), \(t^2\ge n\), and
\(\lambda^{-1}\le CL\), we obtain
\(S_0\le2n+2\lambda^{-1}\sigma t^2\le Ct^2\sigma L\).

For the exceptional entries put
\(z_{ji} = \sqrt{h_{ji}/W_i} \xi_{ji}\), setting all other entries
to zero. The corresponding \(Q\) has range in \(E\).
Impose the \(n\) column-sum constraints and \(Q|_E = 0\).
The latter costs at most \(r^2\) linear constraints, since
\(Q|_E:E\to E\), and implies \(Q^2 = 0\).
Consider the linear operator
$$
\mathcal T:\R^{S_1}\longrightarrow\R^{n\times n},
\quad \mathcal T\xi = M^{1/2}Q,
$$
where the codomain carries the Frobenius norm.
By \eqref{eq:hessian} and singular-value min--max,
\begin{align} \label{var:tracebound}
(S_1-r^2-n)_+\sqrt\lambda\le\|\mathcal T\|_*.
\end{align}

To estimate this nuclear norm, let \(\mathsf U\) have columns
\(M^{1/2}y_j\), \(j\in\widetilde J\). The Gram bound gives
$$
\|\mathsf U\|_{\F}^2
 = 2\tr(M\Gamma_J\Gamma_J^T)\le8ne.
$$
Choose an orthonormal eigenbasis \(Mf_\ell = \mu_\ell f_\ell\), and
define \(C_\ell:\R^{S_1}\to\R^{\widetilde J}\) by
$$
(C_\ell\xi)_j
 = \sum_{i:p_{ji}>0}\sqrt{h_{ji}/W_i}
 \langle v_i,f_\ell\rangle\xi_{ji}.
$$
The column mass bound implies
$$
\|C_\ell\|_{\F}^2
 = \sum_{\substack{j\in\widetilde J, i\le n\\p_{ji}>0}}
\frac{h_{ji}}{W_i}\langle v_i,f_\ell\rangle^2
\le2\sum_i W_i^{-1}\langle v_i,f_\ell\rangle^2
 = 2\mu_\ell.
$$
Since
\(\mathcal T\xi = \sum_\ell(\mathsf U C_\ell\xi)f_\ell^T\)
and \(x\mapsto xf_\ell^T\) is an isometry into the Frobenius space,
the nuclear triangle inequality and
\(\|AB\|_*\le\|A\|_{\F}\|B\|_{\F}\) yield
$$
\|\mathcal T\|_*
\le\sum_\ell\|\mathsf U C_\ell\|_*
\le\|\mathsf U\|_{\F}\sum_\ell\|C_\ell\|_{\F}
\le4\sqrt{ne} \tr\sqrt M.
$$
Combining this with \eqref{var:tracebound} and
\(\lambda^{-1/2}\le C\sqrt L\) proves the exceptional bound.
\end{proof}

\section{Gaussian completion by projection} \label{sec:gaussian}

We need estimates for $ZT+F+\Delta$ that hold simultaneously over
all low-rank corrections $F$ and small perturbations $\Delta$.
Their dependence on the entire Gaussian matrix $Z$ is unrestricted.

We first record the singular-value estimates used below.
If $G$ is an $a\times a$ matrix with independent $N(0,1)$ entries,
then, with singular values ordered increasingly,
\begin{align} \label{eq:sv}
\begin{aligned}
\Prob\{s_j(G)<cj/\sqrt a\}&\le Ce^{-cj^2}
 &&(1\le j\le a),\\
\Prob\{s_j(G)>Cj/\sqrt a\}&\le Ce^{-cj^2}
 &&(1\le j\le a/2).
\end{aligned}
\end{align}
These are consequences of \cite[Theorem~2.4]{DS} in its corrected
form \cite{DScorr}. The source uses entry variance $1/a$ and
decreasing singular values: its index $a-j+1$ is our index $j$.
After rescaling, the lower-tail bound is $(C\alpha)^{j^2}$ at
$\alpha j/\sqrt a$, while the upper-tail bound at
$\beta j/\sqrt a$ is
$\exp(-c\beta^2j\min\{j,a-j\})$ for $\beta$ above an absolute
constant. Fixing $\alpha$ small and $\beta$ large gives
\eqref{eq:sv}, with the stated range for the upper tail.

\begin{lemma}[Haar projection bound] \label{gauss:haar-upper}
Let $D\succeq0$ be a deterministic $n\times n$ matrix, and suppose
$\|D\|_{\op}^2\le v$ for some $v>0$. Put $\tau = \tr D^2$.
If $Q$ is a Haar-distributed $n\times p$ orthonormal frame,
$1\le p\le n$, then, for every $u>0$,
\begin{align} \label{eq:haar}
\Prob\left\{\|DQ\|_{\F}^2>
\frac{2p\tau+4pvu}{n}\right\}\le e^{-pu}.
\end{align}
\end{lemma}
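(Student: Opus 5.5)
We realize the Haar frame through a Gaussian matrix and reduce to a Gaussian quadratic form. Let $G$ be an $n\times p$ matrix with independent $N(0,1)$ entries, and write $G = Q(G^TG)^{1/2}$, so that $Q = G(G^TG)^{-1/2}$ is Haar distributed. Then
$$
\|DQ\|_{\F}^2 = \tr\bigl(Q^TD^2Q\bigr)
 = \tr\bigl((G^TG)^{-1/2}G^TD^2G(G^TG)^{-1/2}\bigr)
\le \frac{\|DG\|_{\F}^2}{s_1(G)^2}.
$$
Decoupling $\|DG\|_{\F}^2$ from $s_1(G)^2$ this way loses constants, and our constants $2$ and $4$ are tight. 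We therefore prefer a cleaner route that avoids $s_1(G)$ altogether.

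The cleaner route uses a Chernoff argument with a direct moment generating function bound for the Haar frame. The quantity $f(Q) = \tr(Q^TD^2Q)$ is a sum of $p$ terms $q_k^TD^2q_k$ over the columns. Its mean is $p\tau/n$. We aim for
$$
\E\exp(\theta f(Q))\le \E\exp(\theta\|DG\|_{\F}^2/n)\quad\text{suitably adjusted}.
$$
The standard way to get this is to compare the Haar frame with Gaussian columns normalized by $\|g\|^2\approx n$. The exponential moment of a Gaussian quadratic form is explicit:
$$
\E\exp\Bigl(\tfrac{\theta}{n}\|DG\|_{\F}^2\Bigr)=\prod_\ell(1-2\theta\mu_\ell/n)^{-p/2},
$$
where $\mu_\ell$ are the eigenvalues of $D^2$, and $\mu_\ell\le v$, $\sum\mu_\ell=\tau$.

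Take $\theta = n/(4v)$, so that $2\theta\mu_\ell/n\le 1/2$. Using $-\log(1-x)\le 2x$ for $x\le1/2$, the product is at most $\exp(2p\theta\tau/n)$. Markov's inequality then gives
$$
\Prob\bigl\{f>(2p\tau+4pvu)/n\bigr\}\le \exp\bigl(2p\theta\tau/n-\theta(2p\tau+4pvu)/n\bigr)=\exp\bigl(-4pv u\theta/n\bigr)=e^{-pu}.
$$
This matches the constants exactly. It confirms the intended proof: bound the exponential moment of $f(Q)$ by that of the Gaussian quantity $\|DG\|_{\F}^2/n$.

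The main obstacle is justifying this comparison for Haar frames. We would proceed as follows.

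1. Write $G=[g_1,\dots,g_p]$. Note that $f(Q)$ is the trace of $D^2$ compressed to the random $p$-dimensional subspace $\operatorname{span}(G)$, which is uniformly distributed. Hence $f(Q)=\sup\{\tr(A^TD^2A)\}$ over frames $A$ of that subspace. It is an increasing function of the subspace in the sense of the Ky Fan principle only loosely, so we use a different argument.

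2. Expand the exponential in power series. The moments $\E f(Q)^k$ are dominated by $\E(\|DG\|_{\F}^2/n)^k$. This follows because $Q$ and $(G^TG)$ are independent, $G^TG\succeq0$, and $\E\,G^TG=nI$.

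3. Make step 2 precise by conditioning. We have $\|DG\|_{\F}^2=\tr(D^2QSQ^T)$ with $S=G^TG$ independent of $Q$. By Jensen applied to the convex function $S\mapsto \exp(\theta\tr(D^2QSQ^T)/n)$ conditionally on $Q$, we get
$$
\E\exp\bigl(\theta\|DG\|_{\F}^2/n\bigr)\ge\E_Q\exp\bigl(\theta\tr(D^2Q\,\E S\,Q^T)/n\bigr)=\E\exp(\theta f(Q)).
$$

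This Jensen step resolves the obstacle cleanly, and combined with the computation above it completes the proof.
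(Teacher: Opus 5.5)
Your argument is correct and is essentially the paper's proof: you realize $Q$ through the polar decomposition $G=Q(G^TG)^{1/2}$, with $Q$ independent of $S=G^TG$ and $\E S=nI_p$. Conditional Jensen then gives $\E\exp(n\|DQ\|_{\F}^2/(4v))\le\E\exp(\|DG\|_{\F}^2/(4v))=\det(I-D^2/(2v))^{-p/2}\le\exp(p\tau/(2v))$, and Markov's inequality finishes. The detours through $s_1(G)$, the Ky Fan remark in step 1, and the informal moment claim in step 2 are unnecessary and can be deleted.
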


\begin{proof}
Let $\mathcal B$ be a standard Gaussian $n\times p$ matrix and
write its polar decomposition as $\mathcal B = QS^{1/2}$, where
$S = \mathcal B^T\mathcal B$. Orthogonal invariance implies that
$Q$ is Haar and independent of $S$, and $\E S = nI_p$.
Conditional Jensen's inequality and the Gaussian quadratic-form
integral give
$$
\E\exp\left(\frac{n\|DQ\|_{\F}^2}{4v}\right)
\le\E\exp\left(\frac{\|D\mathcal B\|_{\F}^2}{4v}\right)
 = \det\left(I-\frac{D^2}{2v}\right)^{-p/2}
\le\exp\left(\frac{p\tau}{2v}\right).
$$
For the last inequality use $-\log(1-x)\le2x$ on $[0,1/2]$.
Markov's inequality proves \eqref{eq:haar}.
\end{proof}

\begin{lemma}[Uniform Gaussian completion] \label{lem:completion}
There are absolute constants $c,C>0$ and an integer $\kappa\ge4$
with the following property. Let $n\ge1$, let $1\le d\le n$ be
an integer, let $t>0$, and let $T\succeq0$ be a deterministic
$n\times n$ matrix. Set
$$
L = \log(2n),\quad T_0 = \max(T,t^{-1/2}I),\quad D = T_0^{-1},
$$
where the maximum is taken by spectral calculus, and denote the
eigenvalues of $T_0$ by $\tau_1\le\cdots\le\tau_n$.
If $Z$ is a standard Gaussian $n\times n$ matrix, then outside
a Borel event of probability at most $Ce^{-cd^2}$ every matrix
$$
Y = ZT+F+\Delta\in GL(n),\quad
\rank F\le d,\quad \|\Delta\|_{\op}\le\frac1{4\sqrt t},
\quad \|Y^{-1}\|_{\F}^2\le t
$$
satisfies
\begin{align} \label{eq:completion}
\tau_j^2\ge\frac{j}{\kappa dt}\quad(1\le j\le n),
\quad
\|Y^{-1}\|_*\le C\sqrt{dt} L^{3/2}.
\end{align}
The exceptional event depends only on $Z$ and the deterministic
data $T,t,d$.
\end{lemma}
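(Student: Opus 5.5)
The proof proceeds by projection onto orthogonal complements of Gaussian directions, combined with Lemma~\ref{gauss:haar-upper}.

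\textbf{Setup.} Write $F = AB^T$ with $A,B$ having $d$ columns. Fix $j$ and let $V_j$ be the span of the eigenvectors of $T_0$ with eigenvalues $\tau_1,\dots,\tau_j$. Since $\|Y^{-1}\|_{\F}^2\le t$, every unit vector $x$ satisfies $\|Yx\|_2\ge \|Y^{-1}\|_{\op}^{-1}\ge t^{-1/2}$. More usefully, for any subspace $U$ of dimension $p$, the smallest singular value of $Y|_U$ is large on average: $\sum_i s_i(Y)^{-2}\le t$ forces $s_i(Y)^2\ge i/t$. Hence every subspace $U$ of dimension $p\ge 2d+\ell$ contains many directions on which $\|Yx\|$ is not small. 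The bound $\tau_j^2\ge j/(\kappa dt)$ will be derived by contradiction.

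\textbf{Contradiction step.} Suppose $\tau_j^2<j/(\kappa dt)$. Restrict $Y$ to $U = V_j\cap\ker B^T$, which has dimension at least $j-d$. On $U$ the correction vanishes, so $Y = ZT+\Delta$ there. The min--max bound gives a subspace $U'\subset U$ of dimension $\asymp j$ (for $j\ge 2d$, say) on which $\|Yx\|_2^2\ge cj/t$ for all unit $x\in U'$. On the other hand, $\|\Delta x\|\le 1/(4\sqrt t)$, while $ZTx$ restricted to $V_j$ has $\|T|_{V_j}\|_{\op}\le\tau_j$. The space $\ker B^T$ depends on the sample, however, so a direct union bound fails.

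Here the projection is used. Consider $P = $ the orthogonal projection onto the complement of $\operatorname{range}A$, and compare $\|PZTx\|$ with the large Frobenius mass of $Y^{-1}$. Concretely, apply the dual form: $Y^{-T}$ maps $\operatorname{range}(P)$-directions, and $Y^{-1}Y = I$ gives
\[
\|P_{V_j}\|_{\F}^2 = \|Y^{-1}(ZT+F+\Delta)P_{V_j}\|_{\F}^2 .
\]
Choosing a Haar frame $Q$ of $\ker$-type dimension and using Lemma~\ref{gauss:haar-upper} with $D = T_0^{-1}$ bounds how much Gaussian mass can be aligned with low-$\tau$ directions. The fact that $ZT$ on $V_j$ has operator norm at most $C\sqrt n\,\tau_j$ with probability $1-e^{-cn}$, together with the Davidson--Szarek small-singular-value estimate~\eqref{eq:sv} applied to the $j\times j$ compressed block $P_{W}ZP_{V_j}$ (where $W$ avoids $\operatorname{range}A$), should produce about $j-2d$ singular values of $Y|_{V_j}$ of size $\lesssim \tau_j\,i/\sqrt{j}\cdot\sqrt{?}+1/(4\sqrt t)$. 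These contradict $s_i(Y)^2\ge i/t$ once $\tau_j^2<j/(\kappa dt)$. The union over the $d$-dimensional range of $A$ would be handled by a net on the Grassmannian, of cost $e^{Cdn}$. That is too big, so instead one works with the $d^2$ scale: the probability $e^{-cd^2}$ suggests using the bottom $\asymp d$ singular values of Gaussian blocks, each failing with probability $e^{-cd^2}$ by~\eqref{eq:sv}, and interlacing (rank-$d$ perturbations shift singular indices by $d$) to remove $F$ without any net. This interlacing step is the one I would pursue first.

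\textbf{Nuclear norm.} Given the envelope, bound $\|Y^{-1}\|_* = \sum_i s_i(Y)^{-1}$ dyadically. Interlacing relates $s_{i+d}(Y)$ to $s_i(ZT+\Delta)\ge s_i(ZT_0)-$(correction). With $s_i(ZT_0)\gtrsim$ the envelope $\sqrt{i/(dt)}$ up to logs in dyadic blocks, via~\eqref{eq:sv} and a union over $L$ scales, one obtains $s_i(Y)^{-1}\lesssim \sqrt{dt/i}\,L$. The first $2d$ indices are controlled by Cauchy--Schwarz with $\|Y^{-1}\|_{\F}^2\le t$, contributing $\sqrt{2dt}$. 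Summing gives $C\sqrt{dt}L^{3/2}$.

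\textbf{Main obstacle.} The hardest step is the uniformity over sample-dependent $F$ and $\Delta$, i.e.\ justifying the interlacing reduction to the deterministic comparison between $ZT$ and the envelope.
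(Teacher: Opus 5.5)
The proposal has a genuine gap at its central step. Your final step for the envelope is right: if $ZT$ has a $2d$-dimensional subspace on which $\|ZTx\|\le\frac14\sqrt{d/t}\,\|x\|$, intersecting it with $\ker F$ leaves $d$ dimensions on which $\|Yx\|\le\frac12\sqrt{d/t}\,\|x\|$. Then $s_d(Y)^2\le d/(4t)$, contradicting $s_d(Y)^2\ge d/t$, with no net over $F$. But you never produce this subspace, and the routes you try cannot. Restricting to $V_j$ or $V_j\cap\ker B^T$ only gives $\|ZTx\|\lesssim\sqrt n\,\tau_j\|x\|$. That contradicts $s_i(Y)^2\ge i/t$ only if $\tau_j^2\lesssim j/(nt)$, which is impossible since $\tau_j^2\ge 1/t$; so this route proves nothing and loses the factor $n/d$. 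A compression $P_WZP_{V_j}$ with $W$ avoiding $\operatorname{range}A$ depends on $F$, which reinstates the union over corrections. You also give no mechanism by which such a block controls the singular values of $ZT$; the ``$\sqrt{?}$'' marks exactly this point.

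The missing idea is to work with the inverse. Fix one violating index $q$. It depends only on $T,t,d$, so there is no union over indices, and $q>\kappa d\ge 4d$ because $\tau_q^2\ge 1/t$. Split $Z=[Z_1,Z_2]$ along the bottom $q$ eigenvectors of $T_0$, and let $P_q$ be a frame for $(\operatorname{range}Z_2)^\perp$. Since $P_q^TZ_2=0$, the first $q$ coordinates of $Z^{-1}P_qy$ are $G_q^{-1}y$, where $G_q=P_q^TZ_1$ is, conditionally on $Z_2$, a standard $q\times q$ Gaussian matrix. Hence $\|T_0^{-1}Z^{-1}P_qy\|\ge\tau_q^{-1}\|G_q^{-1}y\|$. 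The upper tail in \eqref{eq:sv} at index $2d\le q/2$ gives $s_{2d}(G_q)\le Cd/\sqrt q$ off probability $Ce^{-cd^2}$. Therefore $s_{2d}(ZT)\le s_{2d}(ZT_0)\le Cd\tau_q/\sqrt q<C\sqrt{d/(\kappa t)}$, which supplies the required subspace for $\kappa$ large. Lemma~\ref{gauss:haar-upper} plays no role here; it becomes useful only after the envelope yields $\tr D^2\le CdtL$.

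The nuclear-norm sketch has a similar hole. The bounds $s_i(ZT_0)\gtrsim\sqrt{i/(dt)}$ up to logarithms do not follow from \eqref{eq:sv} for $Z$ together with the envelope. The product inequality only gives $s_{2i}(ZT_0)\ge s_i(Z)\tau_i\gtrsim i^{3/2}/\sqrt{\kappa dtn}$, and summing reciprocals from $i\approx d$ yields order $\sqrt{nt}$ rather than $\sqrt{dt}L^{3/2}$. What is needed is the random orientation of the right singular vectors $w_j$ of $Z$ relative to the eigenbasis of $T_0$. The span of $w_1,\ldots,w_{2p}$ is Haar, so Lemma~\ref{gauss:haar-upper} with $\tr D^2\le CdtL$, $\|D\|_{\op}^2\le t$ and $u=d$ gives $\sum_{j\le2p}\|Dw_j\|^2\le CpdtL/n$ simultaneously for dyadic $p\ge d$. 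Combined with $s_j(Z)\ge cj/\sqrt n$, each block $\sum_{p<j\le2p}s_j(Z)^{-1}(Dw_j)u_j^T$ of $(ZT_0)^{-1}$ has rank at most $p$ and nuclear norm at most $C\sqrt{dtL}$. Next, $(Y-\Delta)^{-1}$ differs from the sum of these blocks by a matrix of rank at most $(\kappa+2)d$, not $2d$; the contributions come from $F$, from $T_0-T$, and from the first $d$ terms. The inequality $\|B\|_*\le\|C\|_*+\sqrt h\,\|B\|_{\F}$ for $\rank(B-C)\le h$ absorbs this using the Frobenius bound. Finally, $\Delta$ is handled most cleanly through $(Y-\Delta)^{-1}=(I-Y^{-1}\Delta)^{-1}Y^{-1}$ with $\|Y^{-1}\Delta\|_{\op}\le1/4$.
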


\begin{proof}
All exceptional events below include the Gaussian null sets where
a required rank condition fails.
We first prove the spectral inequality in \eqref{eq:completion}.
Suppose it fails, and fix one violating index $q$, determined by
$T,t,d$. Since $\tau_q^2\ge1/t$, necessarily $q>\kappa d$.
By orthogonal invariance, after a deterministic orthogonal change
of domain coordinates, assume $T$ and $T_0$ are diagonal in the
prescribed order.
Write $Z = [Z_1,Z_2]$, where $Z_1$ has $q$ columns. Choose a
Borel orthonormal frame $P_q$ for
$(\operatorname{range}Z_2)^\perp$; if $q = n$, take $P_q = I$.
Conditional on $Z_2$, the matrix $G_q = P_q^TZ_1$ is a standard
$q\times q$ Gaussian matrix.

Almost surely $Z$ and $G_q$ are invertible. For $y\in\R^q$,
the first $q$ coordinates of $Z^{-1}P_qy$ equal $G_q^{-1}y$,
because $P_q^TZ_2 = 0$. Consequently
\begin{align} \label{gauss:projection}
\|DZ^{-1}P_qy\|\ge\tau_q^{-1}\|G_q^{-1}y\|.
\end{align}
Since $2d\le q/2$, the upper-tail estimate in \eqref{eq:sv}
gives $s_{2d}(G_q)\le Cd/\sqrt q$ outside an event of probability
$Ce^{-cd^2}$. Whenever this bound holds, \eqref{gauss:projection} and
singular-value min--max give
$$
s_{2d}(ZT_0)\le\tau_qs_{2d}(G_q)\le Cd\tau_q/\sqrt q.
$$
Thus $ZT_0$ has at least $2d$ singular values below this bound.
The inequality $ZT^2Z^T\preceq ZT_0^2Z^T$ gives the same
conclusion for $ZT$, including when $T$ is singular.

Let $H$ be a $2d$-dimensional right singular subspace on which
$\|ZTx\|\le Cd\tau_q\|x\|/\sqrt q$.
For every $F$ of rank at most $d$, the intersection
$H\cap\ker F$ has dimension at least $d$. On this intersection,
every admissible completion satisfies
$$
\|Yx\|\le
\left(\frac{Cd\tau_q}{\sqrt q}+\frac1{4\sqrt t}\right)\|x\|
<\left(\frac C{\sqrt\kappa}+\frac14\right)
\sqrt{\frac dt} \|x\|.
$$
Fix the integer $\kappa$ sufficiently large that the last
coefficient is at most $1/2$. Min--max then gives
$s_d(Y)\le\frac12\sqrt{d/t}$, and hence
$\|Y^{-1}\|_{\F}^2\ge d/s_d(Y)^2\ge4t$, a contradiction.
Thus, when the deterministic envelope fails, no admissible
completion exists outside the single projection exceptional event.
There is no union over $q$.

It remains to prove the nuclear bound when the envelope holds.
Its deterministic consequences are
\begin{align} \label{eq:envelope}
\begin{gathered}
\rank(T_0-T)\le\kappa d,\quad \|D\|_{\op}^2\le t,\quad
\tr D^2\le CdtL,\\
\tr T_0^2\ge\frac{n(n+1)}{2\kappa dt}.
\end{gathered}
\end{align}
Indeed, every eigenvalue of $T_0$ equal to $t^{-1/2}$ has index
at most $\kappa d$, and summing the envelope and its reciprocal
gives the two trace estimates.
If $n\le(\kappa+2)d$, then
$\|Y^{-1}\|_*\le\sqrt n\|Y^{-1}\|_{\F}\le C\sqrt{dt}$,
which suffices. Assume henceforth that $n>(\kappa+2)d$.

Write $Z = \sum_{j = 1}^n s_ju_jw_j^T$ with $s_j$ increasing.
A union bound in \eqref{eq:sv} shows that, outside an event
of probability $Ce^{-cd^2}$,
$$
s_j\ge cj/\sqrt n\quad(d\le j\le n).
$$
For every $p = 2^a d<n$, $a = 0,1,\ldots$, put
$p' = \min(2p,n)$. The span of $w_1,\ldots,w_{p'}$ is Haar.
Applying \eqref{eq:haar} with $v = t$, $u = d$, and
\eqref{eq:envelope} gives, simultaneously for all these $p$,
\begin{align} \label{gauss:haar-prefix}
\sum_{j\le p'}\|Dw_j\|^2\le\frac{CpdtL}{n}
\end{align}
outside an event of probability at most
$\sum_{a\ge0}e^{-2^ad^2}\le Ce^{-cd^2}$.
No independence between the singular-value and Haar events is
required.

On their intersection, define
$$
R_p = \sum_{j = p+1}^{p'}s_j^{-1}(Dw_j)u_j^T.
$$
The vectors $u_j$ are orthonormal, so
$$
\rank R_p\le p,\quad
\|R_p\|_{\F}^2
 = \sum_{j = p+1}^{p'}s_j^{-2}\|Dw_j\|^2
\le\frac{Cn}{p^2}\sum_{j\le p'}\|Dw_j\|^2
\le\frac{CdtL}{p}.
$$
Thus $\|R_p\|_*\le C\sqrt{dtL}$. There are at most $CL$ such
blocks, and therefore
\begin{align} \label{gauss:inverse-tail}
(ZT_0)^{-1} = R_0+R_{\rm tail},\quad
\rank R_0\le d,\quad
\|R_{\rm tail}\|_*\le C\sqrt{dt} L^{3/2},
\end{align}
where $R_0$ contains the terms with $j\le d$.

Now fix any admissible completion and put $V = Y-\Delta$.
Since $\|Y^{-1}\Delta\|_{\op}\le1/4$, the matrix $V$ is
invertible and
$$
\|V^{-1}\|_{\F}
 = \|(I-Y^{-1}\Delta)^{-1}Y^{-1}\|_{\F}
\le2\sqrt t.
$$
Moreover, $V-ZT_0 = F+Z(T-T_0)$ has rank at most
$(\kappa+1)d$. The identity for the difference of inverses
gives the same rank bound for $V^{-1}-(ZT_0)^{-1}$.
Consequently, by \eqref{gauss:inverse-tail},
$\rank(V^{-1}-R_{\rm tail})\le h: = (\kappa+2)d<n$.

For any $n\times n$ matrices $B,C$ with $\rank(B-C)\le h<n$,
singular-value interlacing and Cauchy--Schwarz give
\begin{align} \label{gauss:rank-transfer}
\|B\|_*\le\|C\|_*+\sqrt h \|B\|_{\F}.
\end{align}
Indeed, min--max on the intersection with $\ker(B-C)$ yields
$s_j(B)\le s_{j+h}(C)$ for $j\le n-h$; sum these inequalities
and bound the largest $h$ singular values of $B$ by
$\sqrt h\|B\|_{\F}$.
Applying \eqref{gauss:rank-transfer} proves
$\|V^{-1}\|_*\le C\sqrt{dt} L^{3/2}$.
Finally,
$$
Y^{-1} = (I-Y^{-1}\Delta)V^{-1},\quad
\|I-Y^{-1}\Delta\|_{\op}\le\frac54,
$$
which proves the required bound for $Y^{-1}$.
All exceptional events used in the proof depend only on
$Z,T,t,d$, so the conclusion is simultaneous over all admissible
$F$ and $\Delta$.
\end{proof}

\section{The exceptional threshold and finite records} \label{sec:records}

We now choose the exceptional generators so that the cost of recording
the ordinary coefficients is compatible with either a small-rank
completion obstruction or the Gaussian test in the next section.
All statements about minima in this section concern realizations
in $\mathcal G$ and joint minima with value at most $t$.

\begin{lemma}[Choice of threshold] \label{lem:threshold}
For every such minimum there are a dyadic number
$L^6\le\sigma\le2C_0n$ and an unsigned set $J$ of cardinality
$r\le C_0n/L^6$ such that
\begin{align} \label{eq:threshold-energy}
g_j^TMg_j\ge\sigma t/2\quad(j\in J),\quad
g_j^TMg_j\le\sigma t\quad(j\notin J).
\end{align}
Moreover, one of the following alternatives holds:
\begin{align} \label{eq:dichotomy}
\begin{array}{ll}
\textup{(A)}&\sigma = \sigma_0: = 2^{\lceil\log_2L^6\rceil},
\quad r\le k;\\
\textup{(B)}&r>k,\quad
2^{-3/2}\frac{t\sigma^{3/2}}{L^4}
\le r\le2\frac{t\sigma^{3/2}}{L^4}.
\end{array}
\end{align}
\end{lemma}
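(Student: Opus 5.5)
The plan is to prove the lemma by a discrete intermediate-value argument on the generator energies $\varepsilon_j = g_j^TMg_j$. The key observation is that \eqref{eq:threshold-energy} does not force a particular $J$. For dyadic $\sigma$ put
$$
A_\sigma = \{j:\varepsilon_j>\sigma t\},\quad B_\sigma = \{j:\varepsilon_j\ge\sigma t/2\},\quad a(\sigma) = |A_\sigma|,\quad b(\sigma) = |B_\sigma|.
$$
Every $J$ with $A_\sigma\subset J\subset B_\sigma$ satisfies \eqref{eq:threshold-energy}. Hence every integer $r\in[a(\sigma),b(\sigma)]$ is realized by some admissible $J$.

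\emph{Basic bounds.} On $\mathcal G$,
$$
\sum_j\varepsilon_j = \tr(M\Gamma\Gamma^T)\le\|\Gamma\|_{\op}^2\tr M\le C_0nt,
$$
so Markov's inequality gives $a(\sigma)\le C_0n/\sigma$ and $b(\sigma)\le 2C_0n/\sigma$. Also $\|g_j\|_2^2\le4n$, by the Gram bound with $|I|=1$, and $\|M\|_{\op}\le\tr M\le t$. Hence $\varepsilon_j\le4nt$, and so $a(\sigma)=0$ once $\sigma\ge4n$. Enlarging $C_0$ if necessary, this includes the largest dyadic $\sigma\le2C_0n$. Set $f(\sigma)=t\sigma^{3/2}/L^4$, which grows by the factor $2^{3/2}$ under doubling. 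Since $L^6\le\sigma_0<2L^6$,
$$
tL^5\le f(\sigma_0)<2^{3/2}tL^5, \qquad k\le tL^5<k+1 .
$$

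\emph{Case split.} If $a(\sigma_0)\le k$, take $\sigma=\sigma_0$ and $J=A_{\sigma_0}$. This gives (A) with $r\le C_0n/\sigma_0\le C_0n/L^6$.

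Otherwise $a(\sigma_0)>k$. Let $\sigma^*$ be the smallest dyadic $\sigma\ge\sigma_0$ with $a(\sigma)\le2f(\sigma)$. It exists and is at most $2C_0n$, since $a$ vanishes at the top dyadic value.

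If $\sigma^*=\sigma_0$, take $J=A_{\sigma_0}$. Then $r\le2f(\sigma_0)$ and $r\ge k+1>tL^5>2^{-3/2}f(\sigma_0)$, so (B) holds. The bound $r\le C_0n/L^6$ follows as before.

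If $\sigma^*>\sigma_0$, minimality gives $a(\sigma^*/2)>2f(\sigma^*/2)=2^{-1/2}f(\sigma^*)$. Since $A_{\sigma^*/2}\subset B_{\sigma^*}$, we get $b(\sigma^*)\ge a(\sigma^*/2)>2^{-1/2}f(\sigma^*)$, while $a(\sigma^*)\le2f(\sigma^*)$. Thus $[a(\sigma^*),b(\sigma^*)]$ meets $[2^{-3/2}f(\sigma^*),2f(\sigma^*)]$ in an interval of length at least $(2^{-1/2}-2^{-3/2})f(\sigma^*)\gg1$, which contains an integer $r$. Choose a nested $J$ of that size. Then
$$
r\ge2^{-3/2}f(\sigma^*)\ge f(\sigma_0)\ge tL^5\ge k,
$$
and in fact $r>k$ because $r\ge2^{-1/2}f(\sigma^*)\cdot2^{-1}\cdot$ (any strict margin) or simply by rounding up. Moreover $r\le b(\sigma^*)\le2C_0n/\sigma^*\le C_0n/L^6$, since $\sigma^*\ge2\sigma_0\ge2L^6$. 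This is alternative (B).

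\emph{Main obstacle.} The delicate point is bookkeeping of constants rather than any real difficulty. First, the crude bound $b(\sigma)\le2C_0n/\sigma$ only yields $r\le C_0n/L^6$ when $\sigma\ge2L^6$. This is why the case $\sigma^*=\sigma_0$ must use $J=A_{\sigma_0}$. Second, at $\sigma_0$ the comparison between $k$ and $2^{-3/2}f(\sigma_0)$ is tight. It relies on $\sigma_0<2L^6$ together with $k+1>tL^5$. If strictness $r>k$ in the last case needs care, one picks $r$ as the largest integer in the intersection, which exceeds $2^{-1/2}f(\sigma^*)-1\ge 2f(\sigma_0)-1>k$.
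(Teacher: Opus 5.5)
Your argument is correct and is essentially the paper's crossing-point argument. The paper takes the continuous infimum $\tau$ of levels $u\ge L^6$ with $\#\{j:g_j^TMg_j>ut\}\le tu^{3/2}/L^4$, selects exactly $\lceil t\tau^{3/2}/L^4\rceil$ generators by breaking ties at level $\tau$, and rounds $\tau$ up to a dyadic $\sigma$; you instead search over dyadic $\sigma$ directly and use the slack $A_\sigma\subset J\subset B_\sigma$ to hit the target count.

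Two small repairs are needed:
\begin{enumerate}
\item The intersection $[a(\sigma^*),b(\sigma^*)]\cap[2^{-3/2}f(\sigma^*),2f(\sigma^*)]$ can collapse to the single integer $a(\sigma^*)$, so the length claim is false. Drop it and rely on your largest-integer choice, which is also what secures the strict inequality $r>k$.
\item No enlargement of $C_0$ is needed. Since $\max_j\varepsilon_j\le\sum_j\varepsilon_j\le C_0nt$, you already have $a(\sigma)=0$ at the largest dyadic $\sigma\le2C_0n$, which exceeds $C_0n$.
\end{enumerate}
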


\begin{proof}
Put $\eta_j = g_j^TMg_j/t$ and $R(\tau) = t\tau^{3/2}/L^4$.
By \eqref{eq:good} and \eqref{eq:metric},
$\sum_j\eta_j\le C_0n$.
If $\#\{j:\eta_j>L^6\}\le R(L^6) = tL^5$, take this set as
$J$ and put $\sigma = \sigma_0$. Its cardinality is at most
$\lfloor tL^5\rfloor = k$, and \eqref{eq:threshold-energy} holds.

Otherwise let
$$
\tau = \inf\{u\ge L^6:\#\{j:\eta_j>u\}\le R(u)\}.
$$
The set is nonempty and $\tau>L^6$. Right continuity of the
counting function, its left limit, and continuity of $R$ give
$$
\#\{j:\eta_j>\tau\}\le R(\tau)
\le\#\{j:\eta_j\ge\tau\}.
$$
Take all indices with larger energy and enough indices with energy
equal to $\tau$ to obtain $r = \lceil R(\tau)\rceil$ elements;
break ties by increasing index. Then $r>k$ and
$R(\tau)\le r\le2R(\tau)$ for large $n$.
Set $\sigma = 2^{\lceil\log_2\tau\rceil}$, so
$\tau\le\sigma\le2\tau$. This proves both the energy inequalities
and Case~\textup{(B)}.
In either construction, every selected energy is at least $L^6$,
so $rL^6\le\sum_j\eta_j\le C_0n$.
In the second construction $\tau\le\max_j\eta_j\le C_0n$;
the stated upper bound on $\sigma$ follows in both cases.
\end{proof}

\subsection{Recording the ordinary coefficients}

Define the normalized unsigned coefficient matrix
$$
\mathsf C_{ji} = \sqrt{W_i}(p_{ji}-p_{m+j,i})
\quad(1\le j\le m, 1\le i\le n).
$$
Then $Y = \Gamma\mathsf C$ and
$\sum_i\|\mathsf C e_i\|_1^2\le\tr W\le t$.
Choose $(\sigma,J)$ by Lemma~\ref{lem:threshold}, put
$A = \mathsf C_J$, and round each entry of $\mathsf C_{J^c}$
toward zero to a multiple of $n^{-4}$. Denote the rounded matrix
by $B$, with its rows indexed in increasing order by $J^c$.
On $\mathcal G$,
\begin{align} \label{eq:record}
\begin{gathered}
Y = \Gamma_JA+\Gamma_{J^c}B+\Delta,\quad
\|\Delta\|_{\F}\le n^{-2},\\
\sum_i\bigl(\|Ae_i\|_1+\|Be_i\|_1\bigr)^2\le t,
\quad \#\supp A\le S_1,\quad\#\supp B\le S_0.
\end{gathered}
\end{align}
Indeed, rounding does not enlarge supports or absolute column sums,
and
$$
\|\Delta\|_{\F}
\le\|\Gamma\|_{\op} 
\|\mathsf C_{J^c}-B\|_{\F}
\le\sqrt{C_0n}\sqrt{mn} n^{-4}\le n^{-2}
$$
for sufficiently large $n$.

Fix an absolute $C_s$ large enough for the ordinary support bound
in Lemma~\ref{lem:support}. For numerical parameters $(\sigma,r)$,
let $\mathcal D_{\sigma,r}$ be the deterministic family of pairs
$(J,B)$ satisfying $|J| = r$, with $B$ an $(m-r)\times n$ matrix
whose entries are multiples of $n^{-4}$, such that
$$
\#\supp B\le\lceil C_st^2\sigma L\rceil,
\quad \sum_i\|Be_i\|_1^2\le t.
$$
Only the numerical parameters allowed by
Lemma~\ref{lem:threshold} will be used.

\begin{lemma}[Record count] \label{lem:records}
For every such pair $(\sigma,r)$,
\begin{align} \label{eq:outer}
\log|\mathcal D_{\sigma,r}|\le Ct^2\sigma L^2.
\end{align}
Every minimum under consideration supplies a member of this family.
\end{lemma}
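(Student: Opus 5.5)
The count is a direct combinatorial estimate: choose $J$, then the support of $B$, then the entries on that support, and take logarithms. The membership claim follows from \eqref{eq:record} and Lemma~\ref{lem:support}.

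\emph{Choice of $J$.} There are $\binom{m}{r}$ choices. Since $r\le C_0n/L^6\le m$, this contributes at most $m\log 2\le Cn$. Because $t\ge\sqrt n$ by \eqref{eq:scales} and $\sigma\ge L^6$, we have $t^2\sigma L^2\ge nL^8\ge n$, so this term is absorbed.

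\emph{Support of $B$.} Put $s=\lceil C_st^2\sigma L\rceil$. The matrix $B$ has $(m-r)n\le 2n^2$ entries, so the number of supports of size at most $s$ is at most
$$\sum_{u\le s}\binom{2n^2}{u}\le (2n^2+1)^{s}.$$
Its logarithm is at most $Cs\log n\le Ct^2\sigma L^2$. This is the source of the second factor of $L$.

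\emph{Values on the support.} Each entry of $B$ is a multiple of $n^{-4}$. The constraint $\sum_i\|Be_i\|_1^2\le t$ gives $\|Be_i\|_1\le\sqrt t\le\sqrt n$, so every entry has absolute value at most $\sqrt n$. Each entry therefore takes at most $2n^{9/2}+1$ values. Over the $s$ support positions this gives a factor $(Cn^{5})^{s}$, whose logarithm is again $\le Cs L\le Ct^2\sigma L^2$.

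Multiplying the three counts yields \eqref{eq:outer}. The only point to check is that $\log n\le L$ and that all constants are absolute.

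\emph{Membership.} Given a minimum, Lemma~\ref{lem:threshold} supplies $(\sigma,J)$ with $|J|=r$ among the allowed parameters. The rounded matrix $B$ has entries that are multiples of $n^{-4}$, and by \eqref{eq:record}
$$\sum_i\|Be_i\|_1^2\le\sum_i\bigl(\|Ae_i\|_1+\|Be_i\|_1\bigr)^2\le t.$$
Lemma~\ref{lem:support} gives $\#\supp B\le S_0\le Ct^2\sigma L$, and this is at most $\lceil C_st^2\sigma L\rceil$ by the choice of $C_s$. Hence $(J,B)\in\mathcal D_{\sigma,r}$.

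None of these steps is a real obstacle. The only care needed is that the support bound applies to the chosen $(\sigma,J)$: it requires $\sigma\ge L^6$ and the energy separation \eqref{eq:threshold-energy}, both of which Lemma~\ref{lem:threshold} guarantees.
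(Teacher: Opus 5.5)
Your proposal is correct and follows the paper's argument essentially verbatim. You count $J$ (at most $2^m$ choices), the supports among at most $2n^2$ positions, and $O(n^{9/2})$ grid values per nonzero entry, absorbing the $Cn$ term via $t^2\ge n$ and $\sigma\ge L^6$; membership then follows from \eqref{eq:record} and Lemma~\ref{lem:support}, exactly as in the paper.
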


\begin{proof}
There are at most $2^m$ choices for $J$. The matrix $B$ has at
most $2n^2$ positions, at most $\lceil C_st^2\sigma L\rceil$
nonzero entries, and each entry has magnitude at most $\sqrt t$.
Thus each nonzero value has at most $1+2\sqrt t n^4\le Cn^{9/2}$
possibilities. Counting supports and values gives
$$
\log|\mathcal D_{\sigma,r}|
\le Cn+C(1+t^2\sigma L)L\le Ct^2\sigma L^2,
$$
using $t^2\ge n$ and $\sigma\ge L^6$.
The last assertion follows from \eqref{eq:record} and
Lemma~\ref{lem:support}.
\end{proof}

A \emph{full record} is a triple $(\sigma,J,B)$ with
$(J,B)\in\mathcal D_{\sigma,r}$ in Case~\textup{(A)} or
\textup{(B)}. A minimum \emph{realizes} the record if the threshold
construction of Lemma~\ref{lem:threshold} gives $(\sigma,J)$ and
$B$ is the rounded ordinary coefficient matrix of that same minimum.
In particular, realization includes the energy inequalities
\eqref{eq:threshold-energy} and the representation
\eqref{eq:record}.

\subsection{The conditional Gaussian representation}

Fix a full record and condition only on $\Gamma_J$.
We never condition on realization of the record or on $\mathcal G$.
For head matrices satisfying their Gram bounds, choose an orthogonal
change of coordinates, measurably in $\Gamma_J$, such that
$$
\Gamma_J = \binom{\mathsf R}{0},\quad
\|\mathsf R\|_{\op}\le2\sqrt n,
\quad E = \R^r\oplus0.
$$
Such a choice follows by applying Gram--Schmidt to the head columns
and then to the coordinate vectors. When $r = 0$, use the identity.
Apply the same rotation to all matrices and subspaces and reuse
their notation.

Let $T = (B^TB)^{1/2}$. Since $r\le C_0n/L^6<n$, we have
$m-r\ge n$. The partial isometry in the polar decomposition of
$B$ therefore extends to an isometric embedding
$Q_B:\R^n\to\R^{m-r}$, with $B = Q_BT$.
Choose one such extension for each fixed record. Conditional on
$\Gamma_J$, the matrix $Z = \Gamma_{J^c}Q_B$ in the rotated
coordinates is standard Gaussian. Splitting its rows gives
\begin{align} \label{eq:blocks}
Z = \binom HG,\quad
Y = \binom{\mathsf RA+HT}{GT}+\Delta = ZT+F+\Delta,
\quad F = \binom{\mathsf RA}{0}.
\end{align}
Here $H\in\R^{r\times n}$ and $G\in\R^{(n-r)\times n}$ are
independent, $\rank F\le r$, and
$$
\tr T^2 = \|B\|_{\F}^2\le t.
$$
For a realizing minimum, \eqref{eq:metric} gives
$\|Y^{-1}\|_{\F}^2 = \tr M\le t$, while
\eqref{eq:record} gives
$\|\Delta\|_{\op}\le n^{-2}\le(4\sqrt t)^{-1}$.
The matrices $A$ and $\Delta$ may depend on the entire sample.

\begin{lemma}[Small exceptional rank] \label{lem:small-record}
For every full record in Case~\textup{(A)}, there is a Borel event
containing every realizing sample in $\mathcal G$ whose conditional
probability, given almost every admissible head matrix $\Gamma_J$, is at most
$Ce^{-ck^2}$. The bound is uniform in the record and the head matrix.
\end{lemma}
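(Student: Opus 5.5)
We apply Lemma~\ref{lem:completion} with $d=k$, conditionally on $\Gamma_J$, to the representation \eqref{eq:blocks}. Fix a full record in Case~\textup{(A)}, so $r\le k$ and $T=(B^TB)^{1/2}$ is deterministic given the record. Conditional on $\Gamma_J$, the matrix $Z=\Gamma_{J^c}Q_B$ in rotated coordinates is standard Gaussian. The rotation and $Q_B$ depend only on $\Gamma_J$ and the record, so $T$ is fixed. Take the Borel exceptional event to be the event of Lemma~\ref{lem:completion} for the data $T,t,d=k$; its conditional probability is at most $Ce^{-ck^2}$, uniformly in the record and in $\Gamma_J$. We then show that every realizing sample in $\mathcal G$ lies in this event.

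Suppose a realizing sample in $\mathcal G$ were outside the exceptional event. The associated $Y$ has the form $ZT+F+\Delta$ with $\rank F\le r\le k$, with $\|\Delta\|_{\op}\le n^{-2}\le(4\sqrt t)^{-1}$, and with $\|Y^{-1}\|_{\F}^2=\tr M\le t$; here $Y\in GL(n)$ because $X$ is invertible. Lemma~\ref{lem:completion} then gives the envelope $\tau_j^2\ge j/(\kappa kt)$ for all $j$. Summing over $j$ yields
$$
\tr T_0^2\ge\frac{n(n+1)}{2\kappa kt}.
$$
On the other hand, $T_0=\max(T,t^{-1/2}I)$ satisfies $T_0^2\preceq T^2+t^{-1}I$, so
$$
\tr T_0^2\le\tr T^2+\frac nt\le t+\frac nt\le 2t,
$$
using $\tr T^2=\|B\|_{\F}^2\le t$ and $t\ge\sqrt n$. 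Combining the two bounds gives $n^2\lesssim \kappa k t^2$, which contradicts $kt^2\le n^2L^{-25}$ from \eqref{eq:scales} for large $n$. Hence every realizing sample in $\mathcal G$ belongs to the exceptional event.

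The remaining step, which I expect to be the main obstacle, is measurability and conditioning. The event must depend only on $Z$ and the deterministic data, not on realization or $\mathcal G$. Lemma~\ref{lem:completion} guarantees exactly this, since its event depends only on $Z,T,t,d$. The map $\Gamma_{J^c}\mapsto Z$ is Borel for each fixed $\Gamma_J$, and the conditional Gaussianity holds for almost every head matrix satisfying the Gram bounds. So the pullback event is Borel and has conditional probability at most $Ce^{-ck^2}$. Head matrices violating the Gram bounds contain no samples of $\mathcal G$, and for them the empty event suffices.
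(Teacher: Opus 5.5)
Your proposal is correct and follows the paper's proof: you apply Lemma~\ref{lem:completion} with $d=k$ to \eqref{eq:blocks}, and you compare the summed envelope $\tr T_0^2\ge n(n+1)/(2\kappa kt)$ with $\tr T_0^2\le\tr T^2+n/t\le 2t$ to contradict $kt^2\le n^2L^{-25}$. You then take the completion lemma's exceptional event as the cover, set to be empty on head matrices failing the Gram bounds, exactly as the paper does. Your check of the admissibility hypotheses ($\rank F\le r\le k$, $\|\Delta\|_{\op}\le n^{-2}\le(4\sqrt t)^{-1}$, $\|Y^{-1}\|_{\F}^2=\tr M\le t$) is only slightly more explicit than the paper's.
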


\begin{proof}
Apply Lemma~\ref{lem:completion} to \eqref{eq:blocks} with $d = k$.
If a realizing minimum exists outside its exceptional event,
\eqref{eq:envelope} implies
$$
\frac{n(n+1)}{2\kappa kt}
\le\tr T_0^2\le\tr T^2+\frac nt\le2t.
$$
This contradicts $kt^2\le n^2L^{-25}$ for large $n$.
The exceptional event of the completion lemma is therefore the
required cover. Define it to be empty on head matrices failing
their Gram bounds. The measurable coordinate choices above preserve
Borel measurability.
\end{proof}

\section{Large exceptional rank} \label{sec:large}

The exceptional generators provide a left test through their metric
energy. The small singular directions of the lower Gaussian block provide
a right test independent of the upper Gaussian block. We show that a realizing minimum
would force cancellation in a Gaussian matrix whose dimension exceeds
the logarithms of the required coefficient and frame nets.

\begin{proposition} \label{prop:large-record}
Fix a full record $(\sigma,J,B)$ in Case~\textup{(B)}, and write
$r = |J|$. Thus
$$
\sigma\ge L^6,\quad r>k,\quad
2^{-3/2}\frac{t\sigma^{3/2}}{L^4}
\le r\le2\frac{t\sigma^{3/2}}{L^4}.
$$
There is a Borel event $\mathcal E_{\sigma,J,B}$ containing every
sample in $\mathcal G$ for which a minimum realizes this record, such
that
$$
\Prob\bigl(\mathcal E_{\sigma,J,B}\mid\Gamma_J\bigr)
\le Ce^{-cr^2}
$$
for almost every head matrix $\Gamma_J$ satisfying the Gram bounds
in \eqref{eq:good}. The constants are independent of the record and
of the conditioned head matrix.
\end{proposition}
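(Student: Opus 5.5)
We follow the strategy sketched in the introduction for the large-rank case. Condition on $\Gamma_J$ and work in the rotated coordinates of \eqref{eq:blocks}, so that $Y = \binom{\mathsf RA+HT}{GT}+\Delta$ with $H,G$ independent standard Gaussian blocks and $T$ deterministic, $\tr T^2\le t$. The idea is to produce a left test vector in $E=\R^r\oplus0$ and a right test subspace from $G$, and to show that together they force $\mathsf RA$ to nearly cancel $HT$ on a large subspace. This is a Gaussian small-ball event that we then union bound over finite nets.

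\emph{Step 1 (right test).} The lower block $GT$ has $n-r$ rows, so $\ker(GT)$ has dimension at least $r$. More robustly, apply \eqref{eq:sv} to the $(n-r)\times(n-r)$ compressions of $G$, as in the proof of Lemma~\ref{lem:completion}. Outside an event of probability $Ce^{-cr^2}$ depending only on $G$ and $T$, this gives an $r'$-dimensional subspace $V_G$, $r'\asymp r$, measurable in $G$, on which $\|GTx\|\lesssim (r/\sqrt n)\|T\|\,\|x\|$, or at least $\|GTx\|\ll \|x\|/\sqrt t$. On $V_G$ the vector $Yx$ is essentially concentrated in $E$, namely $Yx\approx P_E(\mathsf RA+HT)x$.

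\emph{Step 2 (left test).} Use $YY^T=M^{-1}$ together with \eqref{eq:head}, which gives $\tr(P_EM^3)\le 512e/\sigma$ and $e\ge r\sigma t/(8n)$. By one weighted spectral selection of $P_EMP_E$, I would extract an $r''$-dimensional subspace $U\subset E$, $r''\asymp r$, on which $M$ is large and $M^3$ is controlled. Since $\|Y^{-1}\|$ is large on $U$ in the $M$-metric while $Y$ maps $V_G$ nearly into $E$, the minimality constraints force $\|P_U(\mathsf RA+HT)x\|\le \varepsilon\|x\|$ for $x$ in a subspace of $V_G$ of dimension $\asymp r$, with $\varepsilon$ tiny compared to the natural Gaussian scale $\|Tx\|$ or $t^{-1/2}$. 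Equivalently, the Gaussian compression $P_UHT|_{V}$ is within $\varepsilon$ of $-P_U\mathsf RA|_V$.

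\emph{Step 3 (nets and small ball).} Discretize the exceptional coefficients $A$. Lemma~\ref{lem:support} gives $\#\supp A\le S_1\le r^2+n+C\sqrt{nLe}\,\tr\sqrt M$, and Lemma~\ref{lem:completion} with $d=r$ gives $\|Y^{-1}\|_*\lesssim\sqrt{rt}L^{3/2}$, which controls $\tr\sqrt M$ and hence the grid. The log-cardinality of the coefficient grid is then $\lesssim r^2 L^{C}$ minus the room built into \eqref{eq:dichotomy} and \eqref{eq:scales}. We also need a net over left frames $U\subset\R^r$ of dimension $r''$, which has log-size $\lesssim r^2\log(1/\varepsilon)$, and the right space $V$ is determined by $G$. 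Condition on $G$ and fix a net pair. Then $P_UHT|_V+P_U\mathsf RA|_V$ is a Gaussian matrix with $\asymp r^2$ independent entries of variance $\gtrsim s_{\min}(T|_V)^2$, plus a fixed shift. The standard small-ball estimate gives probability $(C\varepsilon/\text{scale})^{cr^2}$, which beats the net sizes. Finally, define $\mathcal E_{\sigma,J,B}$ as the union of these small-ball events and the exceptional events of Steps 1--2.

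\emph{Main obstacle.} The hardest step is making the constants in Step 3 actually win. The ratio $\varepsilon/\text{scale}$ must be a negative power of $L$ raised to $r^2$, while both nets have log-size only $r^2\cdot O(\log L)$. This requires a lower bound on $T$ restricted to $V$, which I would obtain from the envelope \eqref{eq:completion} by choosing $V$ inside the span of the large eigenvectors of $T_0$. It also uses the calibration $r\asymp t\sigma^{3/2}/L^4$ from Case~(B) to match the coefficient-grid size against $r^2$.
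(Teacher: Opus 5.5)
\textbf{There is a genuine gap in Step~3.} It is structural, not a matter of constants.

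First, your nets are much larger than $r^2O(\log L)$. The support bound \eqref{eq:support} allows up to $r^2$ nonzero entries in the $r\times n$ matrix $A$. The rounding error is amplified by $\|\mathsf R\|_{\op}\|V\|_{\op}\lesssim\sqrt{rtL}$ and must stay below the Gaussian scale, while the entries range up to $\sqrt t$. So each entry costs $\asymp L$, and the coefficient net has entropy of order $r^2L$; this is the $r^2L$ term in \eqref{eq:Aentropy}. The left-frame net likewise costs $\asymp rsL$.

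Second, the cancellation is only mildly strong. With a left test normalized by $U^TM^{-1}U=I_s$ and $\Sigma^2\succeq\theta$ on its $E$-part, and a right test with $TV$ orthonormal and $\|V\|_{\op}^2\lesssim rtL/n$, the main term of the squared ratio (cancelled quantity over Gaussian scale) is about $rtL/(\theta n\dim V)$. With $\dim V\asymp r$ and the only available bound $\theta\gtrsim\sigma t/(nL)$, this is $\lesssim L^2/\sigma$. That is just $L^{-4}$ when $\sigma\asymp L^6$ and $r$ is close to $k$, a situation Case~(B) allows. Your small-ball bound is then at best $e^{-cr^2\log L}$ against nets of entropy $\asymp r^2L$, so the union bound fails.

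Moreover, $r''\asymp r$ in Step~2 is not available. \eqref{eq:head} controls only the mass $\theta s\gtrsim r\sigma t/(nL)$ of a dyadic block of the left test, with $\theta\le\rho=C_\rho\sigma^{-1/2}$. The dimension $s$ may be far smaller than $r$, which makes an $s\times r$ Gaussian block even weaker.

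\textbf{The missing idea} is to make the right test much larger than $r$ and to tie its dimension to $\theta$. The paper takes $q=\lfloor\theta n/(\rho L)\rfloor$, so $b=sq\asymp\theta sn/(\rho L)\ge cr^2L^2$ whatever $s$ is, while both nets have entropy at most $Cb/L$. The cancellation bound $\|D_L^T(HZ_0+\mathsf RAV)\|_{\F}\le C\sqrt{b/L}$ then only needs to fall below the fixed scale $\sqrt b/4$, and the crude estimate $\Prob\{\|\xi+a\|_2\le\sqrt b/4\}\le e^{-cb}$ suffices.

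Such a test does not come from a lower bound on $T$ over a deterministic eigenspace of $T_0$. It comes from the random position of the bottom $2q$-dimensional right singular subspace $\mathcal V$ of $G$. By \eqref{eq:haar}, $\|D_0|_{\mathcal V}\|_{\F}^2\lesssim qrtL/n$. Discarding at most $q/2$ bad directions and intersecting with $\ker(TD_0-I)$ leaves a $q$-frame $Z_0$. Setting $V=D_0Z_0$ gives $TV=Z_0$, $\|GZ_0\|_{\op}\lesssim q/\sqrt n$, $\|V\|_{\op}^2\lesssim rtL/n$, and $HZ_0$ exactly standard Gaussian given $G$.

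Finally, the left test should be $U=M^{1/2}Q_{\le\rho}$ restricted to a dyadic block, not a subspace of $E$. Only this normalization gives $\|Y^TU\|_{\F}=\sqrt s$ exactly. Its component outside $E$ is absorbed by $\|U\|_{\op}^2\le\rho$ together with the smallness of $GZ_0$.
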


\begin{proof}
We work under the conditional representation \eqref{eq:blocks}, with
a fixed head matrix satisfying its Gram bounds. In particular,
$H\in\R^{r\times n}$ and $G\in\R^{(n-r)\times n}$ are independent
standard Gaussian matrices, $\|\mathsf R\|_{\op}\le2\sqrt n$, and
$T = (B^TB)^{1/2}$ is deterministic. We do not condition on realization
of the record or on $\mathcal G$.

Apply Lemma~\ref{lem:completion} with $d = r$. If its deterministic
spectral envelope fails, the exceptional event in that lemma already
contains every realizing sample. We may therefore assume that the
envelope holds. Put $T_0 = \max(T,t^{-1/2}I)$ and $D_0 = T_0^{-1}$.
Then
\begin{align} \label{eq:cov}
\tr D_0^2\le CrtL,\quad \|D_0\|_{\op}^2\le t,\quad
\rank(TD_0-I)\le\kappa r.
\end{align}
Outside a Borel event $\mathcal C$ with conditional probability at
most $Ce^{-cr^2}$, every realizing minimum also satisfies
\begin{align} \label{eq:nuclear}
\tr\sqrt M = \|Y^{-1}\|_*\le C\sqrt{rt} L^{3/2}.
\end{align}
The uniformity of the completion lemma permits the coefficients and
the rounding error to depend on the entire sample. We add $\mathcal C$
to the final exceptional event, without conditioning on its complement.

\paragraph{The left test and the numerical parameters.}
For a realizing minimum on $\mathcal G$, let $e = \tr(P_EM)$ and set
$\rho = C_\rho\sigma^{-1/2}$, where $C_\rho$ is a sufficiently large
absolute constant. By \eqref{eq:head},
$$
\tr\bigl(P_EM1_{(\rho,\infty)}(M)\bigr)
\le\rho^{-2}\tr(P_EM^3)\le e/4.
$$
Let $Q_{\le\rho}$ be an orthonormal frame for the eigenspaces of
$M$ with eigenvalues at most $\rho$, and set
$U_0 = M^{1/2}Q_{\le\rho}$. Its first $r$ rows have squared
Frobenius norm at least $3e/4$, whereas
$U_0^TM^{-1}U_0 = I$ and $\|U_0\|_{\op}^2\le\rho$.
Discarding squared singular values of these rows below $n^{-3}$
loses at most $rn^{-3}\le e/4$. Partition the remaining values
into dyadic intervals $[\theta,2\theta)$. There are at most $CL$
such intervals, since $\rho\le1$ for large $n$.
The singular-value decomposition in an interval carrying the largest
mass gives
\begin{align} \label{eq:left}
U = \binom{D_L\Sigma}{U_\perp},\quad
D_L^TD_L = I_s,\quad
\theta I_s\preceq\Sigma^2\preceq2\theta I_s,\quad
U^TM^{-1}U = I_s,\quad \|U\|_{\op}^2\le\rho,
\end{align}
where $\Sigma$ is positive diagonal and
\begin{align} \label{eq:leftmass}
1\le s\le r,\quad \theta\le\rho,\quad
e\le C\theta sL,\quad
\theta s\ge\frac{cr\sigma t}{nL}.
\end{align}
Indeed, $U$ is the restriction of $U_0$ to the corresponding right
singular vectors of its first $r$ rows. The last inequality follows
from \eqref{eq:head}.

Fix a sufficiently small absolute $c_*>0$. Call $(\theta,s)$
\emph{admissible} if $s\in\{1,\ldots,r\}$, $\theta$ is a dyadic
number in $[n^{-3}/2,1]$, $\theta\le\rho$, and
$\theta s\ge c_*r\sigma t/(nL)$.
This is a deterministic family of at most $CnL$ pairs, and every
realizing minimum supplies a pair in this family satisfying all of
\eqref{eq:left}--\eqref{eq:leftmass}. At a fixed pair we consider
only minima admitting these inequalities at that pair; in particular,
the bound on $e$ is required of those minima.

For an admissible pair put
$$
q = \left\lfloor\frac{\theta n}{\rho L}\right\rfloor,
\quad b = sq.
$$
The admissibility inequalities and the rank relation in the
proposition imply
\begin{align} \label{eq:dimensions}
q\asymp\frac{\theta n}{\rho L},\quad
\frac qr\ge cL^2,\quad b\ge cr^2L^2,\quad 2q\le n/2.
\end{align}
Indeed, $\theta\ge c\sigma t/(nL)$ gives
$\theta n/(\rho L)\ge ct\sigma^{3/2}/L^2\ge crL^2$,
so the floor is harmless. Also
$b\ge cr\sigma^{3/2}t/L^2\ge cr^2L^2$.
Finally, $\theta\le\rho$ gives $2q\le2n/L\le n/2$.

\paragraph{A right test determined by $G$.}
Fix an admissible pair. Let $\mathcal V$ be the bottom $2q$
right singular subspace of $G$, including its $r$ zero singular
directions. Since $r<2q<n$, its cutoff is positive and has a
spectral gap almost surely. Orthogonal invariance makes
$\mathcal V$ a Haar subspace. Appending $r$ independent standard
Gaussian rows and applying \eqref{eq:sv} shows that
$$
\|G|_{\mathcal V}\|_{\op}\le Cq/\sqrt n
$$
outside an event of probability $Ce^{-cq^2}$.
Furthermore, \eqref{eq:haar}, applied with dimension $2q$ and
$u = r$, and \eqref{eq:cov} give
$$
\|D_0|_{\mathcal V}\|_{\F}^2\le C_1qrtL/n
$$
outside an event of probability at most $e^{-2qr}$.
In the compression of $D_0^2$ to $\mathcal V$, discard
eigendirections with eigenvalues larger than $2C_1rtL/n$.
At most $q/2$ dimensions are lost. Intersect the remaining subspace
with $\ker(TD_0-I)$, losing at most $\kappa r$ more dimensions.
By \eqref{eq:dimensions}, at least $q$ dimensions remain.

Choose a $q$-frame $Z_0$ in this intersection and put $V = D_0Z_0$.
Thus, on a Borel event $\mathcal R_{\theta,s}$ determined by $G,T$,
\begin{align} \label{eq:right}
TV = Z_0,\quad
\|GZ_0\|_{\op}\le Cq/\sqrt n,\quad
\|V\|_{\op}^2\le CrtL/n,
\end{align}
and $\Prob(\mathcal R_{\theta,s}^c\mid\Gamma_J)\le Ce^{-cr^2}$.
We include in $\mathcal R_{\theta,s}^c$ the Gaussian null sets where
a required rank or spectral-gap condition fails.
The frame can be chosen Borel measurably: form the spectral
projections above, form the projection onto their required intersection,
and apply Gram--Schmidt to successive projected coordinate vectors,
omitting zero vectors. These projections are Borel functions of the
matrices; for instance, the projector onto $\ker A$ is
$\lim_{\ell\to\infty}(I+\ell A^TA)^{-1}$. Extend $Z_0$ by a fixed
$q$-frame on the complement of $\mathcal R_{\theta,s}$, and always
set $V = D_0Z_0$. In particular, conditional on $\Gamma_J,G$,
$HZ_0$ is a standard Gaussian $r\times q$ matrix. No choice in this
construction depends on $H$ or on a realizing minimum.

\paragraph{The cancellation inequality.}
Suppose that a minimum realizes the record on $\mathcal G$, admits
the fixed pair in \eqref{eq:left}--\eqref{eq:leftmass}, and that
$\mathcal R_{\theta,s}$ holds. Multiplying \eqref{eq:blocks} by
the two tests gives
$$
\Sigma D_L^T(HZ_0+\mathsf RAV)
 = U^T(Y-\Delta)V-U_\perp^TGZ_0.
$$
By \eqref{eq:metric} and \eqref{eq:left},
$\|Y^TU\|_{\F} = \sqrt s$ and
$\|U_\perp\|_{\F}\le\sqrt{\rho s}$.
Together with $\|\Delta\|_{\F}\le n^{-2}$ and \eqref{eq:right},
this yields
$$
\|D_L^T(HZ_0+\mathsf RAV)\|_{\F}
\le\frac C{\sqrt\theta}
\left(\sqrt{\frac{srtL}{n}}
+n^{-2}\sqrt{\frac{\rho rtL}{n}}
+q\sqrt{\frac{\rho s}{n}}\right).
$$
The squares of the three terms, divided by $b = sq$, are at most
$$
\frac{CrtL}{\theta nq}\le\frac C\sigma,
\quad Cn^{-2}L,
\quad\frac{C\rho q}{\theta n}\le\frac CL,
$$
respectively. For the first estimate use
$\theta\ge c\sigma t/(nL)$,
$q\asymp\theta n/(\rho L)$, and
$r\le Ct\sigma^{3/2}/L^4$.
For the second, use
$\rho/\theta\le CnL/(t\sigma^{3/2})\le Cn$,
$rt/n\le n$, and $b\ge1$.
Consequently every such minimum satisfies
\begin{align} \label{eq:cancellation}
\|D_L^T(HZ_0+\mathsf RAV)\|_{\F}\le C\sqrt{b/L}.
\end{align}

\paragraph{Finite coefficient and frame nets.}
Assume in addition that $\mathcal C^c$ holds.
The exceptional support bound \eqref{eq:support},
\eqref{eq:nuclear}, and $e\le C\theta sL$ imply
$$
\#\supp A\le r^2+n+C\sqrt{nrt\theta s} L^{5/2}.
$$
Round the entries of $A$ toward zero on the $n^{-4}$-grid,
obtaining $A_\#$. The possible rounded matrices belong to a
deterministic finite family $\mathcal A_{\theta,s}$: impose the
displayed support bound and
$\sum_i\|A_\#e_i\|_1^2\le t$.
Every entry then has magnitude at most $\sqrt t\le\sqrt n$.
Counting supports and grid values, and using $r^2\ge n$, gives
\begin{align} \label{eq:Aentropy}
\log|\mathcal A_{\theta,s}|
\le C\left(r^2L+\sqrt{nrt\theta s} L^{7/2}\right)
\le Cb/L.
\end{align}
Here $r^2\ge n$ follows from $r>k$ and $k^2\gg n$.
For the second inequality, \eqref{eq:dimensions} gives
$r^2L/b\le C/L$, while
$$
\frac{\sqrt{nrt\theta s} L^{7/2}}b
\le C\rho L^{9/2}\sqrt{\frac{rt}{n\theta s}}
\le\frac{CL^5}{\sigma}\le\frac CL.
$$
The rounding error satisfies
$$
\|\mathsf R(A-A_\#)V\|_{\F}
\le Cr\sqrt{ntL} n^{-4}\le Cn^{-2}\sqrt L.
$$

Let $\mathcal N_{r,s}$ be a deterministic Frobenius $n^{-4}$-net
of the $r\times s$ orthonormal frames, with orthonormal centers.
A volume comparison in the ambient space gives
$$
\log|\mathcal N_{r,s}|\le CrsL\le Cb/L.
$$
On $\mathcal G$, the conditional representation by rotations and
an isometric embedding implies
$\|H\|_{\op}\le\|\Gamma\|_{\op}\le C\sqrt n$.
For every $A_\#\in\mathcal A_{\theta,s}$, its coefficient budget
and \eqref{eq:right} give
$$
\|HZ_0+\mathsf R A_\#V\|_{\F}
\le Cn+Ct\sqrt{rL}\le Cn^2.
$$
Replacing $D_L$ by a center $D_\#\in\mathcal N_{r,s}$ therefore
costs at most $Cn^{-2}$ in Frobenius norm. It follows from
\eqref{eq:cancellation} that, for sufficiently large $n$, every
minimum under consideration forces
\begin{align} \label{eq:fixedball}
\|D_\#^THZ_0+D_\#^T\mathsf R A_\#V\|_{\F}
\le\sqrt b/4
\end{align}
for some $A_\#\in\mathcal A_{\theta,s}$ and
$D_\#\in\mathcal N_{r,s}$.
The event $\mathcal G$ was used only in this deterministic
implication, not as additional conditioning.

\paragraph{The conditional probability estimate.}
For fixed centers, conditional on $\Gamma_J,G$, the first term in
\eqref{eq:fixedball} is a standard Gaussian matrix with $b = sq$
independent entries, and the second term is a fixed shift.
For a standard Gaussian vector $\xi\in\R^b$ and any fixed $a$,
$$
\Prob\{\|\xi+a\|_2\le\sqrt b/4\}
\le e^{b/32}\E e^{-\|\xi+a\|_2^2/2}
 = e^{b/32}2^{-b/2}e^{-\|a\|_2^2/4}
\le e^{-c b}.
$$
The two net cardinalities have total logarithm at most $Cb/L$.
Their union thus has conditional probability at most $e^{-c'b}$
for large $n$, uniformly in the conditioned data.

Define $\mathcal E_{\sigma,J,B}$ to be the union of $\mathcal C$
and, over all admissible pairs, the events
$\mathcal R_{\theta,s}^c$ and the finite unions of
\eqref{eq:fixedball} on $\mathcal R_{\theta,s}$.
All these events are Borel by the choices made above.
Every realizing minimum on $\mathcal G$ either lies in
$\mathcal C$, or supplies an admissible pair and then lies in one
of the corresponding events. Integration of the conditional
small-ball estimates therefore gives
$$
\Prob\bigl(\mathcal E_{\sigma,J,B}\mid\Gamma_J\bigr)
\le Ce^{-cr^2}
+\sum_{(\theta,s)}\left(Ce^{-cr^2}+e^{-c'b}\right)
\le C'e^{-c''r^2}.
$$
The final inequality uses at most $CnL$ pairs,
$b\ge cr^2L^2$, and $r^2\ge n$. On head matrices failing their
Gram bounds, extend the event by the empty set. This preserves
Borel measurability and does not affect the covering property on
$\mathcal G$. No measurable selection of a minimum is needed.
\end{proof}

\section{Proof of the main theorem} \label{sec:conclusion}

By Lemma~\ref{lem:preliminary}, $\mathcal G^c$ has probability at
most $Ce^{-cn}$. On $\mathcal G$, any inscribed basis with
$\beta(X)\le t/2$ gives a joint minimum of value at most $t$,
by Lemma~\ref{var:minimum}. This minimum realizes a full record
in Case~\textup{(A)} or \textup{(B)}.
We sum the measurable covering events for all these records.
The conditional bounds of Lemma~\ref{lem:small-record} and
Proposition~\ref{prop:large-record} are uniform on the admissible
head matrices; outside that set the covering events were defined
to be empty. They therefore give the same unconditional bounds.

In Case~\textup{(A)}, $\sigma = \sigma_0\asymp L^6$ and $r\le k$.
By Lemma~\ref{lem:records}, the logarithm of the total number of
records, including the possible values of $r$, is at most
$$
Ct^2L^8+\log(k+1) = o(k^2).
$$
Each record has a covering event of probability at most $Ce^{-ck^2}$,
so their union has probability at most $Ce^{-c'k^2}$.

In Case~\textup{(B)}, fix the numerical pair $(\sigma,r)$.
The rank comparison in \eqref{eq:dichotomy} gives
$$
\frac{\log|\mathcal D_{\sigma,r}|}{r^2}
\le\frac{Ct^2\sigma L^2}{r^2}
\le\frac{CL^{10}}{\sigma^2}\le\frac C{L^2}.
$$
By Proposition~\ref{prop:large-record}, the record union for this
pair has probability at most $Ce^{-c'r^2}\le Ce^{-c'k^2}$.
There are at most $CnL$ pairs: $\sigma$ is dyadic in
$[L^6,2C_0n]$ and $r$ is an integer at most $n$.
Since $k^2\gg n$, summing over all pairs still gives
$Ce^{-c''k^2}$.

Consequently, outside a Borel event of probability at most
$$
Ce^{-cn}+Ce^{-c'k^2}\le C'e^{-c''n},
$$
the event $\mathcal G$ holds and every inscribed basis satisfies
$\beta(X)>t/2$. Equation~\eqref{eq:distance} implies
$$
d_{\mathrm{BM}}(K,B_1^n)\ge\frac{c_0t}{2}
 = \frac{c_0}{2}\frac{n^{2/3}}{(\log(2n))^{10}}.
$$
Every minimum was covered by a finite record and, in the large-rank
case, a finite choice of numerical parameters and net centers.
No measurable choice of a minimizing representation was required.
Finally, the first $n$ Gaussian columns are linearly independent
almost surely, so $K$ and $K^\circ$ are convex bodies.
Polarity gives
$d_{\mathrm{BM}}(K,B_1^n) = d_{\mathrm{BM}}(K^\circ,B_\infty^n)$,
which proves \eqref{eq:main}.
\qed

\medskip
\noindent\textbf{Declaration of AI use.}
The research project, its main ideas, and the overall proof strategy are due to the author. The author formulated the variational framework and the statement of Proposition~\ref{prop:large-record}. At the author's request, OpenAI's ChatGPT (GPT-6) generated a proof of this proposition, which was used in preparing the proof presented here. ChatGPT was also used to explore simplifications, examine arguments, and assist with drafting and revising the exposition. The author takes full responsibility for the mathematical content and the final manuscript.

\end{document}